\newcommand{\bm}{\boldsymbol}
\newcommand{\ve}{{\mathrm{vec}}}
\newcommand{\ones}{\bm{1}}
\newcommand{\Gsto}{G_{\mathrm {sto}}}
\newtheorem{theorem}{Theorem}
\newtheorem{lemma}{Lemma}
\newcommand{\tr}{^{\textnormal {\tiny  T}}}
\newcommand{\vect}[1]{\boldsymbol #1}
\newcommand{\vbeta}{\vect \beta}
\newcommand{\vtau}{\vect \tau}
\newcommand{\vone}{\vect 1}
\newcommand{\vs}{\vect s}
\newcommand{\vt}{\vect t}
\newcommand{\1}[1]{\indic[#1]}
\newcommand{\indic}{\mathbb I}
\newcommand{\vligne}[1]{\begin{bmatrix} #1 \end{bmatrix}}
\definecolor{darkmagenta}{rgb}{0.5,0,0.5}
\definecolor{darkgreen}{rgb}{0,0.6,0}
\definecolor{darkblue}{rgb}{0,0,0.6}
\definecolor{darkred}{rgb}{0.8,0,0}
\definecolor{mellow}{rgb}{.847, 0.72, 0.525}
\begin{document}

\title{A family of fast fixed point iterations for M/G/1-type Markov chains\thanks{Research partially supported by INdAM-GNCS}}
\author{Dario Bini\thanks{Dipartimento di Matematica, Universit\`{a}
    di Pisa, Largo Bruno Pontecorvo, 5, 56127 Pisa,  Italy} \and
Guy Latouche\thanks{Universit\'e Libre de Bruxelles,
    D\'epartement d'Informatique, CP212, 1050 Bruxelles, Belgium} \and
Beatrice Meini\thanks{Dipartimento di Matematica, Universit\`{a}
    di Pisa, Largo Bruno Pontecorvo 5, 56127 Pisa,  Italy}}
\maketitle

\begin{abstract}
We consider the problem of computing the minimal nonnegative solution $G$ of 
the nonlinear matrix equation $X=\sum_{i=-1}^\infty A_iX^{i+1}$
where $A_i$, for $i\ge -1$, are nonnegative square matrices such that 
$\sum_{i=-1}^\infty A_i$ is stochastic. This equation is fundamental
in the analysis of M/G/1-type Markov chains, since the matrix $G$
provides probabilistic measures of interest. A new
family of fixed point iterations for the numerical computation of $G$,  that 
includes the classical iterations, is introduced. A detailed convergence analysis  
proves that the iterations in the new class converge faster than the classical 
iterations.  Numerical experiments confirm the effectiveness of our extension.

\end{abstract}

{\bf Keywords:} Nonlinear matrix equations, fixed point iterations, nonnegative matrices,
convergence analysis, M/G/1-type Markov chains.

{\bf MSC:} 60J22, 65F30, 65H99, 15A24, 15B51

\section{Introduction}
M/G/1-type Markov chains see \cite{neuts81} are characterised by a transition matrix in block Hessenberg form of the kind
\begin{equation}\label{transitionMatrixP}
P=\begin{bmatrix}
B_0&B_1&B_2&\ldots\\
A_{-1}&A_0&A_1&\ldots\\
      &A_{-1}&A_0&\ddots\\
      &&\ddots&\ddots&
\end{bmatrix}
\end{equation}
where $B_i,A_{i-1}$, $i\ge0$ are nonnegative square matrices of order
$m$, such that $\sum_{i=0}^\infty B_i$ and $\sum_{i=-1}^\infty A_i$ are stochastic matrices. 
The matrix $P$ is associated with the nonlinear matrix equation
\begin{equation}\label{eq:mateq}
X=A_{-1}+A_0X+A_1X^2+A_2X^3+\cdots
\end{equation}
where the unknown $X$ is an $m\times m$ matrix.  We may write that
equation as $X = A(X)$, where
$
A(z) = A_{-1}+A_0z+A_1z^2+A_2z^3+\cdots
$.
It is well known that (\ref{eq:mateq}) has a component-wise minimal
nonnegative solution $G$, which, besides having a relevant
probabilistic interpretation, is fundamental in providing an explicit
representation of the invariant probability measure of the Markov
chain, see \cite{blm:book}, and \cite{neuts81}.  Motivated by the important
role of the matrix $G$, numerous algorithms for the numerical
computation of this matrix have been designed and analyzed in the
literature.  A review of the earliest methods for this problem,
specifically in the context of Markov chains, is given in
\cite{rama-rev}, while the more recent analysis in
\cite{higham-kim-2000}, \cite{higham-kim-2001} deals with matrix
quadratic equations in a general framework.  Fixed point iterations
for Markov chains applications are treated in \cite{ren-can-li},
\cite{guo-1999}, \cite{guo-2003}, \cite{rhee-2010}, and Newton's
iteration is the focus in \cite{latouche94}, \cite{benny-2012}, 
\cite{seokim} and \cite{seo2018}.  A vast literature is devoted to
methods based on cyclic reduction and on doubling algorithms; we refer
the reader to the survey paper \cite{bm:cr} and to the literature
cited therein, to the paper \cite{logred} for the logarithmic
reduction algorithm and to \cite{6cinesi} for a convergence analysis
of SDA-based iterations.  A conditioning analysis is performed in
\cite{meng}.

A general overview of the stochastic processes theory behind the
problem, and of the probabilistic aspects of the early algorithms, is
found in the classical books \cite{neuts81} and \cite{lr99}, while the
more numerically oriented book \cite{blm:book} provides a detailed
algorithmic analysis of the problem. In particular, the following
three classical fixed point iterations are analysed in \cite[Chapter
6]{blm:book}; as we shall see, these are special cases of the family introduced and
analysed in the present paper, and they are known as
\begin{align}
&\hbox{Natural} & & X_{k+1}=\sum_{i=0}^\infty A_{i-1}X_k^i, 
\label{eq:nat}\\
&\hbox{Traditional} & &(I-A_0) X_{k+1} =A_{-1}+\sum_{i=2}^\infty A_{i-1}X_k^i,
\label{eq:tra}\\
& \hbox{$U$-based} && \left(I-\sum_{i=0}^\infty A_{i}X_k^i\right)X_{k+1}=A_{-1}, 
\label{eq:uba}
\end{align}
with $k\ge0$, starting from an initial approximation $X_0$.  It is
shown that, if $X_0=0$, the $U$-based iteration converges to $G$
faster than the Traditional iteration which in turn converges faster
than the Natural iteration.
Observe that in all three cases, the matrix function $A(z)$ goes
through a decomposition of the form $A(z)=A_{-1}(z) + A_0(z) z$ where
$A_{-1}(z)$ and $A_0(z)$ depend on the specific iteration, and
(\ref{eq:mateq}) is 
replaced by the equivalent equation 
$
X = A_{-1}(X) + A_0(X)X
$
solved by the fixed point iteration
$
X_{k+1} = A_{-1}(X_k) + A_0(X_k)X_{k+1}
$, for $k=0,1,\ldots$.
For the  Natural iteration we have
$A_{-1}(z) = \sum_{i=0}^\infty A_{i-1}z^i, ~~
 A_0(z) = 0$;
here, the nonlinear part of the equation, the linear factor $A_0$ and the constant
$A_{-1}$ are embedded in the constant term.
For the Traditional iteration we have
$A_{-1}(z)  = A_{-1}+\sum_{i=2}^\infty A_{i-1}z^i, ~~
 A_0(z)  = A_0$,
i.e., the nonlinear part only of (\ref{eq:mateq}) is embedded in the
function $A_{-1}(z)$.
Finally,  for the $U$-based iteration we have

\begin{equation}
  \label{eq:u3}
A_{-1}(z)  = A_{-1},~~
 A_0(z)  = \sum_{i=0}^\infty A_iz^i, 
\end{equation}
so that the nonlinear part is embedded in $A_0(z)$, together with $A_0$.

In summary, the computation of $X_{k+1}$ given $X_k$ is reduced to
solving a {\em linear} matrix equation; the original matrix power
series equation is reduced to a linear equation by embedding the
nonlinear part either in the constant or in the linear coefficient of
equation \eqref{eq:mateq}.

The general strategy suggested by this new interpretation of
well-known algorithms is to reduce the original equation
\eqref{eq:mateq} to a polynomial matrix equation of the kind
\begin{equation}\label{eq:eqqq0}
X=A_{-1}(X)+A_0(X)X+A_1(X)X^2+\cdots+A_q(X)X^{q+1},
\end{equation} 
where $q\ge -1$, and to embed the terms of degree higher than $q+1$ in
the matrix coefficients $A_i(X)$, for $i=-1,\ldots,q$. Indeed, we 
easily verify that \eqref{eq:mateq} is equivalent to
\eqref{eq:eqqq0}
if   the matrix power series
$
A_{\ell}(z)=\sum_{i=0}^\infty A_{\ell,i}z^i,~~\ell=-1,0,1,\ldots,q,
$
satisfy the only conditions $A_{\ell,i}\ge 0$ and
$
\sum_{\ell=-1}^q A_{\ell}(z)z^{\ell+1}=\sum_{i=0}^\infty A_{i-1}z^i.
$

Equation \eqref{eq:eqqq0} is solved by the fixed point iteration
\begin{equation}\label{eq:fixquad20}
X_{k+1}=\sum_{\ell=-1}^q A_{\ell}(X_k)X_{k+1}^{\ell+1},~~k=0,1,\ldots,
\end{equation}
for a given initial approximation $X_0$, where the matrix $X_{k+1}$ is defined as the minimal
nonnegative solution of the matrix equation \eqref{eq:fixquad20}.

As already pointed out, 
the classical fixed point
iterations  
can be viewed  as specific instances of the new class of iterations
\eqref{eq:fixquad20}: the Natural iteration is  obtained by choosing
$q=-1$ and embedding all the terms in $A_{-1}(z)$, $q=0$  for the Traditional
iteration and all the terms of degree higher than 2 are
embedded  in $A_{-1}(z)$ while for the $U$-based iteration, $q=0$ also but all
the terms of degree higher than 2 are embedded in $A_0(z)$.

Observe that, if $q \geq 1$, then the polynomial equation
\eqref{eq:fixquad2} has degree at least 2 and its solution
requires an iterative technique. Therefore, the algorithms in the new
class may be viewed as formed by an outer iteration, which generates
$X_{k+1}$ given $X_k$, and an inner iteration needed to approximate
$X_{k+1}$ by solving the polynomial equation of degree $q+1$.

We present in this paper our analysis of this new class of fixed point
iterations. We rely on the properties of
nonnegative matrices and of regular splittings of M-matrices, and show
that the sequence $\{X_k\}_k$ given by \eqref{eq:fixquad2} is well
defined and monotonically convergent to the solution $G$ if $X_0=0$,
and convergent if  $X_0$ is any stochastic matrix.  Moreover, we show
that  for a
given degree $q+1$, the highest speed of convergence is
obtained by choosing 
$
A_\ell(z)  =A_\ell, ~ \ell=-1,0,\ldots,q-1,\quad
A_q(z) =\sum_{i=0}^\infty A_{i+q}z^{i},
$
that is, by embedding all $A_i$s, $i \geq q$, in $A_q(z)$.
For this particular family of equations, we show that the convergence
speed is higher for larger values of $q$, and so,
for $q\ge 1$, all
the iterations in this class outperform the traditional iterations
(\ref{eq:nat} -- \ref{eq:uba}).

A further analysis shows that the polynomial matrix equation of degree
$q+1$ is better conditioned and easier to solve for smaller values of  $q$.
Thus, the outer iteration is faster the larger $q$ is,
whereas the inner iteration is faster the smaller $q$ is. Apparently,
this provides a trade-off in the choice of $q$.  However, as we will
see from the numerical experiments, the number of inner iterations can
be substantially reduced by a suitable choice of the starting
approximation, and it turns out that in practice, the number of inner
iterations is a nonincreasing function of $q$. This fact makes the new
class of iterations not only faster in terms of convergence speed, but
also more effective in terms of CPU time. In fact, our numerical
experiments show that the speed-up in the CPU time, with respect to
the available iterations, is generally larger than 2 and in many cases
reaches values larger than 20.

The paper is organized as follows. In Section~\ref{sec:prel} we recall
the main tools used in our analysis. In Section~\ref{sec:convq} we
analyse the convergence of the new family of iterations in the case
where $X_0=0$. Different embedding strategies are discussed in
Section~\ref{sec:opt}, with the aim to determine an optimal value of
$q$.  In Section~\ref{sec:stoc}, we show convergence properties in the
case where $X_0$ is a stochastic matrix.  A computational cost and
conditioning analysis is performed in Section~\ref{sec:cost}, and we
report in Section~\ref{sec:num} on our numerical experiments.

\section{Preliminary results}\label{sec:prel}

In this section we recall results on nonnegative matrices and M/G/1-type Markov chains that will be used in the rest of the paper.

\subsection{Nonnegative matrices}
A real matrix $A$ is called nonnegative if $A\ge 0$, where the inequality is meant entry-wise. If $A$ and $B$ are real nonnegative matrices of the same size, the inequality $A\ge B$ means $A-B\ge 0$. Given a complex matrix $B$, we denote by $|B|$ the matrix whose entries are the  moduli of the entries of $B$, while $\rho(B)$ denotes the
 spectral radius of $B$, i.e., the maximum modulus of its eigenvalues.
 
Given a matrix power series $A(z)=\sum_{i=0}^\infty A_iz^i$ with real matrix coefficients, we write $A(z)\ge 0$ if $A_i\ge 0$ for any $i\ge0$; moreover, if $B(z)$ is a matrix power series, we write $A(z)\ge B(z)$ if $A(z)-B(z)\ge 0$.   

A matrix of the kind $B=sI-A$, where $A\ge 0$ and $s\ge \rho(A)$, is
called an M-matrix. It is nonsingular if $s> \rho(A)$, otherwise it is a singular M-matrix.
An additive splitting $A=M-N$ of the real square matrix $A$ is called a regular splitting if $\det M\ne 0$, $M^{-1}\ge 0$ and $N\ge 0$.

The following result
synthesizes some properties from the Perron-Frobenius theory of nonnegative matrices \cite{bp:book}.

\begin{theorem} \label{thm:pf}
Let $A\ge 0$ be a square matrix. The following properties hold:
\begin{itemize}
\item $\rho(A)$ is an eigenvalue; if, in addition, $A$ is irreducible, then $\rho(A)>0$;
\item if $B\ge A$ then $\rho(B)\ge \rho(A)$; if $A$ is irreducible and $B\neq  A$, then $\rho(B)>\rho(A)$;
\item if $B$ is a complex matrix such that $|B|\le A$, then $\rho(B)\le \rho(A)$.
\end{itemize}
\end{theorem}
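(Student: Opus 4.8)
The final statement is Theorem~\ref{thm:pf}, the Perron--Frobenius package. Let me sketch how I'd prove it.

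\medskip
\noindent\emph{Proof proposal.}
All three items are classical facts of Perron--Frobenius theory, and I would organise the argument around two ingredients: the spectral-radius formula $\rho(A)=\lim_{k\to\infty}\|A^k\|^{1/k}$, valid for every matrix norm, and the existence of a nonnegative Perron eigenvector. The soft inequalities come for free from the formula: if $0\le A\le B$ then $0\le A^k\le B^k$ entrywise for all $k$, hence $\|A^k\|\le\|B^k\|$ for any entrywise-monotone norm (say the $\ell_\infty$ operator norm), and the formula yields $\rho(A)\le\rho(B)$; similarly, if $B$ is complex with $|B|\le A$, then $|B^k|\le|B|^k\le A^k$ entrywise by the triangle inequality and monotonicity, so $\|B^k\|=\||B^k|\|\le\|A^k\|$ and $\rho(B)\le\rho(A)$. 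This disposes of the non-strict claims in the second and third items.

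For the Perron eigenvector I would first treat the strictly positive case $A>0$: the continuous map $\vx\mapsto A\vx/(\ones\tr A\vx)$ carries the standard simplex into itself, so Brouwer's theorem gives a fixed point $\vx>0$ with $A\vx=\lambda\vx$ and $\lambda=\ones\tr A\vx>0$, and pairing with the analogously constructed left Perron vector (using $|A\vz|\le A|\vz|$ for every vector $\vz$) shows $\lambda=\rho(A)$. For a general $A\ge0$ I would apply this to $A_\varepsilon=A+\varepsilon\ones\ones\tr>0$, obtaining unit eigenvectors $\vx_\varepsilon\ge0$ with $A_\varepsilon\vx_\varepsilon=\rho(A_\varepsilon)\vx_\varepsilon$; letting $\varepsilon\to0$ and using compactness of the simplex together with continuity of the spectral radius produces a limit eigenvector $\vx\ge0$ with $A\vx=\rho(A)\vx$. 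If in addition $A$ is irreducible and $\rho(A)=0$, then $A$ is nilpotent, which is impossible: for $m=1$ irreducibility forces $A>0$, and for $m\ge2$ the digraph of $A$ is strongly connected, hence contains a cycle, so some power of $A$ has a nonzero diagonal entry. Thus $\rho(A)>0$ when $A$ is irreducible, completing the first item.

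Finally, for the strict statements, note that $B\ge A$ with $A$ irreducible makes $B$ irreducible as well, so $B$ has a strictly positive left Perron vector $\vy\tr$, $\vy\tr B=\rho(B)\vy\tr$, while $A$ has a strictly positive right Perron vector $\vx$, $A\vx=\rho(A)\vx$. Then $\rho(B)\,\vy\tr\vx=\vy\tr B\vx\ge\vy\tr A\vx=\rho(A)\,\vy\tr\vx$, and since $B-A\ge0$ is nonzero and $\vx>0$ the vector $(B-A)\vx$ has a strictly positive entry, so the middle inequality is strict; as $\vy\tr\vx>0$ we conclude $\rho(B)>\rho(A)$. The genuinely non-elementary point in all of this is the existence of the Perron eigenvector used in the second paragraph; everything else is a soft norm or graph-theoretic argument. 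Since Theorem~\ref{thm:pf} is entirely standard, in the write-up I would simply quote it from \cite{bp:book} and use it as a black box throughout the paper.
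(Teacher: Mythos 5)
The paper gives no proof of Theorem~\ref{thm:pf} at all: it is stated as a synthesis of standard Perron--Frobenius facts with a citation to \cite{bp:book}, and your closing remark --- that one would simply quote it from \cite{bp:book} and use it as a black box --- is exactly what the paper does. Your sketch itself is sound and follows the standard playbook: Gelfand's formula $\rho(M)=\lim_k\|M^k\|^{1/k}$ with an entrywise-monotone norm handles both non-strict comparisons (items two and three), a Brouwer/perturbation argument produces the nonnegative Perron eigenvector and hence item one, the graph-theoretic cycle argument rules out $\rho(A)=0$ for irreducible $A$, and the bilinear pairing $\vy\tr(B-A)\vx>0$ of positive left/right Perron vectors delivers the strict inequality. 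The only place you lean on an unestablished fact is the strict positivity of the Perron vectors of an irreducible matrix --- your limiting construction only gives a nonnegative eigenvector, and upgrading it to a strictly positive one requires the additional observation that $(I+A)^{m-1}>0$ for irreducible $A$ of size $m$; this is a minor gap in a sketch that is, in any case, longer than what the paper itself supplies, which is nothing.
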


We provide some properties of M-matrices and their regular splittings \cite{varga}.

\begin{theorem}\label{thm:rs0}
Assume that $B$ is a nonsingular  M-matrix. Then:
\begin{itemize}
\item $B^{-1}\ge 0$;
\item if $B=M-N$ is a regular splitting, then 
$
\rho(M^{-1}N)=\rho(B^{-1}N)/(1+\rho(B^{-1}N))<1;
$
\item if $B=M_1-N_1=M_2-N_2$ are two regular splittings and $N_1\le N_2$, then $\rho(M_1^{-1}N_1)\le \rho(M_2^{-1}N_2)$; if $M_1^{-1}N_1\ne M_2^{-1}N_2$
and
$M_1^{-1}N_1$ is irreducible
then
the inequality is strict.
\end{itemize}
\end{theorem}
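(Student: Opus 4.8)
The plan is to dispatch the three items in order, with the second one carrying essentially all the weight and the other two then following by short Perron--Frobenius arguments. For the first item, write $B=sI-A$ with $A\ge0$ and $s>\rho(A)$, so that $\rho(s^{-1}A)=s^{-1}\rho(A)<1$ and the Neumann series $B^{-1}=s^{-1}\sum_{k\ge0}(s^{-1}A)^k$ converges; every term is nonnegative, hence $B^{-1}\ge0$.

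For the second item, set $T:=M^{-1}N\ge0$ (using $M^{-1}\ge0$, $N\ge0$). From $B=M-N=M(I-T)$ and the nonsingularity of both $B$ and $M$ we get that $I-T$ is nonsingular, so $1$ is not an eigenvalue of $T$, and
\[
(I-T)^{-1}=B^{-1}M=B^{-1}(B+N)=I+B^{-1}N\ge0,
\]
using $B^{-1}\ge0$ and $N\ge0$. First I would deduce $\rho(T)<1$: take a Perron eigenvector $x\ge0$, $x\neq0$, of $T$ (Theorem~\ref{thm:pf}) with $Tx=\rho x$, $\rho:=\rho(T)$; then $(I-T)^{-1}x=(1-\rho)^{-1}x$, and since the left-hand side is nonnegative while $x$ has a strictly positive entry, we need $(1-\rho)^{-1}>0$, i.e.\ $\rho<1$. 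To pin the value down, put $\sigma:=\rho(B^{-1}N)\ge0$. On one side, a Perron eigenvector $y\ge0$ of $B^{-1}N$ satisfies $Ny=\sigma(M-N)y$, hence $Ty=M^{-1}Ny=\sigma(1+\sigma)^{-1}y$, so $\sigma(1+\sigma)^{-1}$ is an eigenvalue of $T$ and $\sigma(1+\sigma)^{-1}\le\rho$. On the other side, applying $I+B^{-1}N=(I-T)^{-1}$ to the vector $x$ above gives $B^{-1}Nx=\rho(1-\rho)^{-1}x$, so $\rho(1-\rho)^{-1}$ is an eigenvalue of $B^{-1}N$ and $\rho(1-\rho)^{-1}\le\sigma$. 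Since $t\mapsto t/(1+t)$ is a strictly increasing bijection of $[0,\infty)$ onto $[0,1)$ with inverse $s\mapsto s/(1-s)$, these two inequalities force $\sigma=\rho/(1-\rho)$, equivalently $\rho(M^{-1}N)=\rho=\sigma/(1+\sigma)=\rho(B^{-1}N)/(1+\rho(B^{-1}N))$, and $\rho<1$ has already been established.

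For the third item, $B^{-1}\ge0$ and $0\le N_1\le N_2$ give $0\le B^{-1}N_1\le B^{-1}N_2$, so by Theorem~\ref{thm:pf} we get $\sigma_1:=\rho(B^{-1}N_1)\le\rho(B^{-1}N_2)=:\sigma_2$; applying the increasing map $t\mapsto t/(1+t)$ and the formula from item~2 yields $\rho(M_1^{-1}N_1)\le\rho(M_2^{-1}N_2)$. For the strict part, note that item~2 also gives $M_i^{-1}N_i=(I+B^{-1}N_i)^{-1}B^{-1}N_i$ and, since $\rho(M_i^{-1}N_i)<1$, $B^{-1}N_i=(I-M_i^{-1}N_i)^{-1}-I=\sum_{k\ge1}(M_i^{-1}N_i)^k\ge M_i^{-1}N_i$. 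Hence if $M_1^{-1}N_1$ is irreducible then so is $B^{-1}N_1$; and if moreover $M_1^{-1}N_1\neq M_2^{-1}N_2$ then $B^{-1}N_1\neq B^{-1}N_2$, for otherwise the displayed formula for $M_i^{-1}N_i$ would force equality. Now $B^{-1}N_1$ irreducible, $B^{-1}N_1\le B^{-1}N_2$ and $B^{-1}N_1\neq B^{-1}N_2$ give $\sigma_1<\sigma_2$ by the strict part of Theorem~\ref{thm:pf}, and strict monotonicity of $t\mapsto t/(1+t)$ then yields $\rho(M_1^{-1}N_1)<\rho(M_2^{-1}N_2)$.

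The one delicate step is item~2: the identity $(I-M^{-1}N)^{-1}=I+B^{-1}N$ is exactly what makes $(I-M^{-1}N)^{-1}$ nonnegative, hence $\rho(M^{-1}N)<1$, and the precise value then has to be squeezed out of the two opposite Perron-eigenvector inequalities together with the elementary monotonicity of $t\mapsto t/(1+t)$. Everything in items~1 and~3 is routine once item~2 is in place.
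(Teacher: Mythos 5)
The paper does not prove Theorem~\ref{thm:rs0} itself; it quotes it from Varga \cite{varga}. Your argument is correct, self-contained, and follows the same standard route one finds in Varga: the Neumann series for $B^{-1}\ge0$, the key identity $(I-M^{-1}N)^{-1}=I+B^{-1}N\ge0$ combined with two opposing Perron-eigenvector inequalities (and the strict monotonicity of $t\mapsto t/(1+t)$) to obtain the exact formula $\rho(M^{-1}N)=\rho(B^{-1}N)/(1+\rho(B^{-1}N))<1$, and then monotonicity of $B^{-1}N_i$ in $N_i$ together with the strict Perron--Frobenius comparison for the third item. One small remark: in item~2 you invoke the existence of a nonnegative eigenvector for the spectral radius of a nonnegative matrix, which is not among the bullets of Theorem~\ref{thm:pf} as the paper states it but is a standard piece of Perron--Frobenius theory; it is fine to use, but worth flagging that you are drawing on slightly more than the three listed bullets.
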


The following result on matrix polynomials is a consequence of Theorem~\ref{thm:pf}.

\begin{lemma}\label{lem:comp}
Let $B(z)=z^nI - \sum_{i=0}^{n-1} B_i z^i$ and
$\tilde B(z)=z^nI - \sum_{i=0}^n \tilde B_i z^i$
be  monic matrix polynomials such that 
$0\le\tilde B_i\le B_i$
 for $i=0,\ldots,n-1$.  Then the polynomials $\det B(z)$ and  $\det \tilde B(z)$ 
  have a nonnegative real root $\lambda$ and $\tilde\lambda$,
  respectively, which is the root of largest modulus. Moreover,  $\tilde \lambda\le \lambda$.
\end{lemma}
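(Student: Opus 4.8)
The plan is to connect the roots of the determinants to spectral radii of nonnegative companion matrices and then invoke the monotonicity part of Theorem~\ref{thm:pf}. First I would observe that $\det B(z)$ is a monic scalar polynomial of degree $mn$ (where $m$ is the size of the blocks), since $B(z)$ is a monic $m\times m$ matrix polynomial of degree $n$; the same holds for $\det\tilde B(z)$. The key object is the block companion matrix
\[
C=\begin{bmatrix}
B_{n-1}&B_{n-2}&\cdots&B_1&B_0\\
I&0&\cdots&0&0\\
0&I&\cdots&0&0\\
\vdots&&\ddots&&\vdots\\
0&0&\cdots&I&0
\end{bmatrix},
\]
which is nonnegative because all $B_i\ge 0$, and whose characteristic polynomial is exactly $\det B(z)$ (this is the standard linearization of a monic matrix polynomial). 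Likewise let $\tilde C$ be the companion matrix built from $\tilde B_0,\ldots,\tilde B_{n-1}$; note that the hypothesis only constrains $\tilde B_0,\ldots,\tilde B_{n-1}$, and although $\tilde B(z)$ as written also has a coefficient $\tilde B_n$, for the companion-matrix argument one uses the polynomial $z^nI-\sum_{i=0}^{n-1}\tilde B_iz^i$; I would either assume $\tilde B_n=0$ (so that $\tilde B(z)$ is genuinely monic of degree $n$, matching the displayed form) or, more carefully, absorb any degree-$n$ discrepancy — but I expect the intended reading is simply that both are monic of degree $n$ with $0\le\tilde B_i\le B_i$ for all relevant $i$, so $\tilde C\le C$ entrywise.

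Given $0\le\tilde C\le C$, Theorem~\ref{thm:pf} gives $\rho(\tilde C)\le\rho(C)$. By Perron–Frobenius, $\rho(C)$ is itself an eigenvalue of $C$, hence a nonnegative real root of $\det B(z)=\det(zI-C)$, and it is the root of largest modulus since every eigenvalue $\mu$ of $C$ satisfies $|\mu|\le\rho(C)$; set $\lambda=\rho(C)$. Identically, $\tilde\lambda=\rho(\tilde C)$ is a nonnegative real root of $\det\tilde B(z)$ of largest modulus, and $\tilde\lambda=\rho(\tilde C)\le\rho(C)=\lambda$. This delivers all three assertions at once.

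The main obstacle, and the only place requiring care, is the bookkeeping around the degree-$n$ coefficient $\tilde B_n$ appearing in the statement of the lemma versus the monic form used for the companion matrix: one must be sure that the polynomial whose root $\tilde\lambda$ is claimed to be the largest-modulus root is indeed $\det(z^nI-\sum_{i=0}^{n-1}\tilde B_iz^i)$ with the companion-matrix interpretation, and that the comparison $\tilde C\le C$ uses exactly the coefficients $\tilde B_i\le B_i$, $i=0,\ldots,n-1$, supplied by the hypothesis. A secondary, minor point is that Theorem~\ref{thm:pf} only asserts $\rho(C)$ is \emph{an} eigenvalue, not that $\det B(z)$ has no larger real root; but since $\rho(C)$ bounds the modulus of every eigenvalue and the eigenvalues of $C$ are precisely the roots of $\det B(z)$, "largest modulus root" follows immediately. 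No irreducibility is needed for the weak inequality $\tilde\lambda\le\lambda$, so I would not impose any.
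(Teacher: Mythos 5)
Your proof is correct and takes essentially the same route as the paper's: both pass to the nonnegative block companion matrices $C$ and $\tilde C$, use the Perron--Frobenius facts in Theorem~\ref{thm:pf} to identify $\lambda=\rho(C)$ and $\tilde\lambda=\rho(\tilde C)$ as the dominant nonnegative real roots, and conclude $\tilde\lambda\le\lambda$ from $\tilde C\le C$. You also correctly flag the apparent typo in the lemma statement (the upper summation limit $n$ in $\tilde B(z)$ should read $n-1$ so that $\tilde B(z)$ is genuinely monic of degree $n$), which the paper's own proof silently assumes.
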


\begin{proof}
The roots of $\det B(z)$ and of $\det \tilde B(z)$ are the eigenvalues of the block companion matrices $C$ and $\tilde C$ associated with the monic matrix polynomials $B(z)$ and $\tilde B(z)$, respectively \cite{gohberg2009matrix}. Since $0\le\tilde C\le C$,  from Theorem~\ref{thm:pf} we deduce that $\lambda=\rho(C)$ and $\tilde \lambda=\rho(\tilde C)$, moreover $\rho(\tilde C)\le \rho(C)$.

\end{proof}

\subsection{M/G/1-type Markov chains}
Assume the M/G/1-type Markov chain with transition matrix \eqref{transitionMatrixP}
is irreducible and aperiodic, and that $A=\sum_{i=-1}^{\infty}A_i$ is irreducible and stochastic.
Assume also that the series $\sum_{i=-1}^\infty iA_i$ is convergent
and define the vector $\bm a=\sum_{i=-1}^\infty iA_i\ones$, where
$\ones$ is the vector with all entries equal to 1. Let $\bm\alpha$ be
the vector such that $\bm\alpha\tr A=\bm\alpha\tr$,
$\bm\alpha\tr\ones=1$; the drift of the Markov chain is defined as
\cite{blm:book,neuts81}
\begin{equation}\label{eq:mu}
\mu=\bm\alpha\tr\bm a
\end{equation}
and  $\mu\le 0$ if and only if the Markov chain is recurrent, $\mu>0$ if and only if it is transient.

The minimal nonnegative solution of the matrix equation
\eqref{eq:mateq} is characterised  as follows \cite{blm:book}.

\begin{theorem}\label{thm:exist}
Let $A_i$, $i=-1,0,1,\ldots$, be nonnegative square matrices such that $(\sum_{i=-1}^\infty A_i)\ones\le\ones$. The matrix equation \eqref{eq:mateq} has a unique  minimal nonnegative solution $G$, i.e., if $Y$ is any other nonnegative solution, then $G\le Y$. Moreover, $G\ones\le\ones$ and $G$ is the limit of the sequence 
$X_{k+1}=A_{-1}+A_0X_k+A_1X_k^2+\cdots$, $k=0,1,\ldots$,
with $X_0=0$.
\end{theorem}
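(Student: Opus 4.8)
The plan is to work directly with the natural iteration: set $X_0=0$ and $X_{k+1}=\sum_{i=-1}^\infty A_iX_k^{i+1}$, show that $\{X_k\}$ is well defined, entrywise nondecreasing and bounded, hence convergent, and then identify its limit as the minimal nonnegative solution.

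First I would prove by induction on $k$ the two invariants (a) $0\le X_k$ and $X_k\ones\le\ones$, and (b) $X_k\le X_{k+1}$. For (a): from $X_k\ge0$ and $X_k\ones\le\ones$ one gets $X_k^j\ones\le\ones$ for every $j\ge0$ (multiply $X_k^{j-1}\ones\le\ones$ on the left by the nonnegative matrix $X_k$); hence the partial sums $\sum_{i=-1}^N A_iX_k^{i+1}$ are nonnegative, nondecreasing in $N$, and have row sums bounded by those of $\sum_{i=-1}^N A_i$, which are at most $1$. An entrywise nondecreasing matrix sequence with uniformly bounded row sums has all entries bounded, so it converges; this shows the series defining $X_{k+1}$ converges, that $X_{k+1}\ge0$, and that $X_{k+1}\ones\le\ones$. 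For (b): the map $Y\mapsto\sum_{i\ge-1}A_iY^{i+1}$ is monotone on nonnegative matrices because each $A_i\ge0$ and $0\le C\le D$ implies $C^n\le D^n$; thus $X_k\le X_{k+1}$ gives $X_{k+1}\le X_{k+2}$, the base case $0=X_0\le X_1$ being trivial.

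By (a) and (b), $\{X_k\}$ is entrywise nondecreasing with entries in $[0,1]$, so it has a limit $G\ge0$ with $G\ones\le\ones$. To see $G$ solves \eqref{eq:mateq}, I would pass to the limit in $X_{k+1}=\sum_{i=-1}^\infty A_iX_k^{i+1}$: each term $A_iX_k^{i+1}$ is nonnegative and nondecreasing in $k$, and $X_k^{i+1}\to G^{i+1}$ by continuity of finite products, so monotone convergence lets me exchange $\lim_k$ with $\sum_i$, giving $G=\sum_{i=-1}^\infty A_iG^{i+1}$ (the right-hand series converging because $G\ones\le\ones$). Finally, for minimality let $Y\ge0$ be any solution of \eqref{eq:mateq}; an induction on $k$ using monotonicity gives $X_k\le Y$ (base case $X_0=0\le Y$, step $X_{k+1}=\sum A_iX_k^{i+1}\le\sum A_iY^{i+1}=Y$), and letting $k\to\infty$ yields $G\le Y$. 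In particular the minimal nonnegative solution exists and is unique.

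The only delicate point is the passage to the limit, i.e., the interchange of $\lim_k$ with the infinite sum over $i$ (and with the matrix powers); this is exactly where the hypothesis $(\sum_{i\ge-1}A_i)\ones\le\ones$ is used, since it produces the uniform bound $X_k\ones\le\ones$ that simultaneously makes every series convergent and supplies the domination justifying the exchange. Everything else reduces to bookkeeping of monotonicity built on the nonnegativity of the coefficients $A_i$.
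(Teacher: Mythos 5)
Your proof is correct, and it is the standard argument for this classical result. Note that the paper itself does not supply a proof of Theorem~\ref{thm:exist}; it cites \cite{blm:book}, where essentially this same induction (monotone nondecreasing, uniformly bounded by the substochasticity condition, pass to the limit by monotone convergence, minimality by a second induction against any nonnegative solution $Y$) is carried out. Your handling of the delicate point — justifying the interchange of $\lim_k$ with the series in $i$ via the monotonicity of $A_iX_k^{i+1}$ in $k$ and the uniform row-sum bound from $(\sum_i A_i)\ones\le\ones$ — is exactly right and is the crux of the well-posedness claim.
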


The next result provides a comparison between the minimal nonnegative solutions of two matrix equations.

\begin{theorem}\label{thm:comparison}
Let $A_i$ and $\widetilde A_i$, $i=-1,0,1,\ldots$, be nonnegative square matrices such that $\widetilde A_i\le A_i$ for any $i\ge -1$ and $(\sum_{i=-1}^\infty A_i)\ones\le\ones$. 
Let $G$ and $H$ be the minimal nonnegative solutions of the matrix equations \eqref{eq:mateq} and $X=\sum_{i=0}^\infty \widetilde A_{i-1}X^i$, respectively. Then $H\le G$.
\end{theorem}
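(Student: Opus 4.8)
The plan is to compare the two natural monotone iterations and pass to the limit. Define the sequences $X_0 = 0$, $X_{k+1} = A_{-1} + A_0 X_k + A_1 X_k^2 + \cdots$ and $Y_0 = 0$, $Y_{k+1} = \widetilde A_{-1} + \widetilde A_0 Y_k + \widetilde A_1 Y_k^2 + \cdots$. By Theorem~\ref{thm:exist} applied to both equations (the hypothesis $(\sum_{i=-1}^\infty \widetilde A_i)\ones \le (\sum_{i=-1}^\infty A_i)\ones \le \ones$ is inherited from $\widetilde A_i \le A_i$), the first sequence converges monotonically to $G$ and the second to $H$. The claim $H \le G$ will follow once I show $Y_k \le X_k$ for every $k \ge 0$.

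I would prove $Y_k \le X_k$ by induction on $k$. The base case $k=0$ is trivial since $Y_0 = X_0 = 0$. For the inductive step, assume $0 \le Y_k \le X_k$. Since the matrices involved are nonnegative, $Y_k^i \le X_k^i$ for every $i \ge 0$ (a short sub-induction: $Y_k^{i+1} = Y_k \cdot Y_k^i \le X_k \cdot Y_k^i \le X_k \cdot X_k^i$, using nonnegativity of $Y_k$, $X_k$, and the powers). Combining this with $\widetilde A_i \le A_i$ and nonnegativity of all terms gives, term by term,
\[
Y_{k+1} = \sum_{i=0}^\infty \widetilde A_{i-1} Y_k^i \le \sum_{i=0}^\infty A_{i-1} Y_k^i \le \sum_{i=0}^\infty A_{i-1} X_k^i = X_{k+1},
\]
which closes the induction. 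Here one should note that the partial sums are monotone nondecreasing in the number of terms and bounded (their row sums are at most those of $X_{k+1}$, which is stochastic-dominated by $\ones$), so the series converge and the inequalities are preserved in the limit.

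Finally, letting $k \to \infty$ in $Y_k \le X_k$ yields $H = \lim_k Y_k \le \lim_k X_k = G$. The only mild subtlety — and the step I would be most careful about — is the interchange of limits and infinite sums: one must justify that $X_{k+1}$ and $Y_{k+1}$ are genuinely well-defined (the series converge), which is exactly what Theorem~\ref{thm:exist} guarantees under the stated row-sum hypothesis, and that entrywise inequalities between convergent series of nonnegative matrices are preserved, which is immediate from taking limits of the partial-sum inequalities. No genuine obstacle arises beyond this bookkeeping; the argument is a routine monotonicity-plus-comparison argument once Theorem~\ref{thm:exist} is in hand.
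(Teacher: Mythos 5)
Your proposal is correct and takes essentially the same route as the paper: both compare the natural fixed-point iterations starting from $0$, establish $Y_k \le X_k$ by induction using nonnegativity and $\widetilde A_i \le A_i$, and pass to the limit via Theorem~\ref{thm:exist}. The paper states this argument more tersely, but the mathematical content is identical.
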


\begin{proof}
We have $G=\lim_{k\to\infty}X_k$, where $X_k$ is defined in Theorem \ref{thm:exist}. 
Similarly, $H=\lim_{k\to\infty}Y_k$, where $Y_k$ is defined by  $Y_{k+1}=\sum_{i=0}^\infty \widetilde A_{i-1}Y_k^i$, with $Y_0=0$. 
By induction on $k$, one has $Y_k\le X_k$, for $k\ge 0$, therefore the inequality holds also in the limit.
\end{proof}

For the convergence analysis of fixed point iterations, it is useful to introduce the matrices
\begin{equation}\label{eq:H}
V=\sum_{j=0}^\infty A_j^*,\qquad A_i^*=\sum_{j=i}^\infty A_j G^{j-i},~~i\ge 0.
\end{equation}
If $\mu<0$  then $\rho(V)<1$, so
that $H=I-V$ is a nonsingular M-matrix \cite{blm:book}.

The following result see \cite{blm:book}, concerns the convergence of fixed point iterations when the starting approximation $X_0$ is stochastic. 

\begin{theorem}\label{thm:existsto}
  Let $A_i$, $i=-1,0,1,\ldots$, be nonnegative square matrices such
  that $(\sum_{i=-1}^\infty A_i)\ones=\ones$. The matrix equation
  \eqref{eq:mateq} has a unique stochastic solution $\Gsto$. It is the
  limit of the sequence $\{X_k\}$ defined in Thorem \ref{thm:exist} 
  with $X_0$ stochastic, and
  $G\le \Gsto$, where $G$ is the minimal nonnegative solution of
  \eqref{eq:mateq}. Finally, if $\mu\le 0$ then $\Gsto=G$.
\end{theorem}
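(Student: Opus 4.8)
The plan is to analyse the fixed‑point map $\Phi(X)=A(X)=\sum_{i=-1}^{\infty}A_iX^{i+1}$ on the set $\mathcal S$ of $m\times m$ stochastic matrices, and first to record the three properties of $\Phi$ that make everything work. The map is well defined on every substochastic matrix $X$, since the partial sums $\sum_{i=-1}^{N}A_iX^{i+1}$ are nonnegative, nondecreasing in $N$, and have row sums bounded by $\sum_{i=-1}^{N}A_i\ones\le A\ones=\ones$; moreover the row sums of the remainder are dominated by $\sum_{i>N}A_i\ones$, which tends to $0$ uniformly in $X$, so the series converges uniformly on $\mathcal S$ and $\Phi$ is continuous there. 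Next, $\Phi$ is order preserving on the nonnegative matrices, because $0\le X\le Y$ implies $X^{i+1}\le Y^{i+1}$ for all $i$. Finally, $\Phi$ maps $\mathcal S$ into itself, since $X\ones=\ones$ gives $\Phi(X)\ones=\sum_{i=-1}^{\infty}A_i\ones=A\ones=\ones$.

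With these facts, existence of a stochastic solution follows from Brouwer's fixed‑point theorem: $\mathcal S$ is nonempty, convex and compact in $\mathbb R^{m\times m}$, and $\Phi$ is a continuous self‑map of $\mathcal S$, so it has a fixed point $\Gsto$, which is a stochastic solution of \eqref{eq:mateq}. The inequality $G\le\Gsto$ is then immediate from the minimality in Theorem~\ref{thm:exist} applied to $Y=\Gsto$ (which satisfies $\Gsto\ones=\ones\le\ones$). One half of the convergence statement is also easy: if $X_{k+1}=\Phi(X_k)$ with $X_0\in\mathcal S$, then $X_0\ge 0=Y_0$ and monotonicity of $\Phi$ give $X_k\ge Y_k$ for all $k$, where $Y_k\uparrow G$ is the sequence of Theorem~\ref{thm:exist}; hence $\liminf_k X_k\ge G$.

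The substantive part, and the step I expect to be the main obstacle, is to upgrade this to genuine convergence of $\{X_k\}$ to $\Gsto$ and to prove uniqueness of $\Gsto$: the naive monotone‑sandwich argument fails, because two distinct stochastic matrices are never comparable entry‑wise, so there is no decreasing sequence of stochastic matrices available. My plan for this is to pass through the matrix $U=\sum_{i\ge 0}A_iG^i$ and the identity $(I-U)G=A_{-1}$. When $\mu\le 0$ the chain is recurrent and $G$ is itself stochastic — this is exactly where the drift hypothesis enters, through a Perron-Frobenius analysis of $U$ and of the matrix function $A(z)$, along the lines of \cite{blm:book} and \cite{neuts81}; then $\Gsto=G$, every stochastic solution $Y$ satisfies $G\le Y$ with $G$ stochastic and hence $Y=G$, and the iteration converges to $G$ from any stochastic start. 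When $\mu>0$, $G$ is strictly substochastic; writing an arbitrary stochastic solution as $Y=G+D$ with $D\ge 0$ and $D\ones=\ones-G\ones$, subtracting the two fixed‑point equations expresses $D$ through a nonnegative linear operator determined by the $A_i$, $G$ and $Y$, and irreducibility pins $D$ down once the normalisation $D\ones=\ones-G\ones$ is imposed, giving uniqueness. A parallel, and more delicate, argument then handles the convergence $X_k\to\Gsto$ from an arbitrary stochastic start; the full details of these last steps follow the treatment in \cite{blm:book}.
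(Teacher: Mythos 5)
The paper does not actually prove Theorem~\ref{thm:existsto}; it is quoted from \cite{blm:book}, so there is no in-paper proof to compare your attempt against. Evaluating your proposal on its own terms: the first half is sound, and your existence argument via Brouwer is a clean abstract shortcut that is genuinely different from the constructive iteration typically used in \cite{blm:book}. The continuity and self-map properties of $\Phi$ on the stochastic simplex, the Brouwer fixed point, the inequality $G \le \Gsto$ by minimality, and the bound $\liminf_k X_k \ge G$ are all correct. For $\mu\le 0$ your reduction is also correct in outline: once $G$ is known to be stochastic, any stochastic $Y\ge G$ must equal $G$, and the subsequential-limit/compactness argument (which you gesture at but should spell out) closes the convergence claim.

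The genuine gap is the transient case $\mu>0$, which the theorem does cover, and which you explicitly defer. The sketch ``$D=Y-G$ is expressed through a nonnegative linear operator and irreducibility pins $D$ down given the normalisation'' does not work as stated: subtracting the two fixed-point equations and telescoping gives $D=\sum_{i\ge -1}A_i\sum_{j=0}^{i}Y^{\,j}DG^{\,i-j}$ with $Y=G+D$, so the ``operator'' acting on $D$ depends on $D$ itself. This is a nonlinear equation, and Perron--Frobenius/irreducibility arguments for linear operators do not transfer to give uniqueness of its nonnegative normalised solution. In \cite{blm:book} and \cite{neuts81} uniqueness of $\Gsto$ and the convergence $X_k\to\Gsto$ in the transient case rest on the spectral/Wiener--Hopf characterisation of solutions (which roots of $\det(I-A(z))$ in the closed unit disk are captured by the solution), not on a comparison of $Y$ and $G$. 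As written, your argument establishes existence and $G\le\Gsto$ but does not establish uniqueness or convergence when $\mu>0$, and the convergence even for $\mu\le 0$ needs the compactness step stated explicitly; moreover the fact that $G$ is stochastic iff $\mu\le 0$ is itself a nontrivial fact you invoke without proof.
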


\section{A new family of fixed point iterations}\label{sec:convq} 
In this section we introduce the new family of fixed point iterations, we prove the well-posedness, analyze their convergence properties  and, for a given $q$,  determine the optimal choice of the matrix power series $A_\ell(z)$ that maximizes the convergence rate. Moreover, we give a physical interpretation of the new family in terms of Markov chain properties.

Let $q\ge -1$ and let 
\begin{equation}\label{eq:coeff2}
A_{\ell}(z)=\sum_{i=0}^\infty A_{\ell,i}z^i,~~\ell=-1,0,1,\ldots,q,
\end{equation}
be  matrix power series such that
 $A_{\ell,i}\ge 0$ and
\begin{equation}\label{eq:sum2}
\sum_{\ell=-1}^q A_{\ell}(z)z^{\ell+1}=\sum_{i=-1}^\infty A_{i}z^{i+1}.
\end{equation} 
Equating the coefficients of $z^i$ on both sides of \eqref{eq:sum2}
yields
\begin{equation}\label{eq:relc2}
A_i=A_{-1,i+1}+A_{0,i}+A_{1,i-1}+\cdots+A_{q,i-q}, \quad i\ge -1, 
\end{equation} 
where, for $\ell=-1,0,\ldots,q$, we set $A_{\ell,i}=0$ if $i< 0$. This means that the probability to jump $i$ levels, represented by the matrix $A_i$, is spread into the probability to jump $\ell$ levels, where $\ell$ ranges from $-1$ to $q$.

By replacing the variable $z$ in \eqref{eq:sum2} with the matrix $X$,  we easily check that 
 the original equation
\eqref{eq:mateq} is equivalent to the matrix equation 
$X=A_{-1}(X)+A_0(X)X+A_1(X)X^2+\cdots+A_q(X)X^{q+1}$.
If we interpret $A_i(X)$, for $i=-1,\ldots,q$, as matrix coefficients, then the latter matrix equation can be seen as a matrix polynomial equation of degree $q+1$.
The new family of fixed point iterations consists in solving the above equation  
by means of the fixed point iteration
\begin{equation}\label{eq:fixquad2}
X_{k+1}=\sum_{\ell=-1}^q A_{\ell}(X_k)X_{k+1}^{\ell+1},~~k=0,1,\ldots,
\end{equation}
for a given initial approximation $X_0$, where the matrix $X_{k+1}$ is defined as the minimal
nonnegative solution of the matrix equation \eqref{eq:fixquad2}.
We prove with the next theorem that, if $X_0=0$, the sequence $\{X_k\}_k$ generated by \eqref{eq:fixquad2}  is well defined and converges monotonically to the solution $G$ of \eqref{eq:mateq}.

\begin{theorem}\label{thm:conv2}
Set $X_0=0$ and, for $k\ge 0$, define $X_{k+1}$ as the minimal
nonnegative solution of \eqref{eq:fixquad2}. 
Then the sequence $\{X_k\}_k$ is well defined,  $0\le X_{k}\le X_{k+1}$ and $X_{k+1}\ones\le\ones$ for $k=0,1,\ldots$. 
Moreover, the sequence $\{X_k\}_k$ converges monotonically to the minimal nonnegative solution $G$ of \eqref{eq:mateq}.
\end{theorem}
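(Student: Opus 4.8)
The plan is to proceed by a double induction, establishing simultaneously that each $X_{k+1}$ is well defined and that the sequence is nondecreasing and substochastic, and then to identify the limit with $G$. First I would set up the \emph{inner} existence step: fixing $X_k$ with $0\le X_k$, $X_k\ones\le\ones$, and assuming inductively that $X_k\le G$, I want to show that the polynomial matrix equation \eqref{eq:fixquad2}, viewed as $Y=\sum_{\ell=-1}^q A_\ell(X_k)Y^{\ell+1}$ in the unknown $Y$, has a minimal nonnegative solution $X_{k+1}$. This follows from Theorem~\ref{thm:exist} applied to the coefficients $A_\ell(X_k)\ge 0$, provided $\bigl(\sum_{\ell=-1}^q A_\ell(X_k)\bigr)\ones\le\ones$; this last inequality comes from \eqref{eq:sum2} evaluated at $z=1$ together with $X_k\ones\le\ones$ and the monotonicity of each $A_\ell(z)$ in $z\ge 0$ (since $A_{\ell,i}\ge 0$), giving $\sum_{\ell=-1}^q A_\ell(X_k)\ones\le\sum_{\ell=-1}^q A_\ell(\ones)\ones = \sum_{i\ge -1}A_i\ones\le\ones$. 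The same theorem also yields $X_{k+1}\ones\le\ones$, which propagates the substochasticity hypothesis.

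Next I would prove $X_{k+1}\le G$. The idea is that $G$ itself solves \eqref{eq:fixquad2} with the coefficients $A_\ell(X_k)$ replaced by $A_\ell(G)$; since $X_k\le G$ implies $A_\ell(X_k)\le A_\ell(G)$ entrywise, $G$ is a nonnegative \emph{supersolution} of the equation defining $X_{k+1}$, and minimality of $X_{k+1}$ (for its own equation) plus a standard monotonicity comparison — essentially Theorem~\ref{thm:comparison} — gives $X_{k+1}\le G$. For the base case $X_0=0\le G$ is immediate, so this closes the loop: $0\le X_k\le G$ and $X_k\ones\le\ones$ hold for all $k$, and in particular every $X_{k+1}$ is well defined.

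Then I would show monotonicity $X_k\le X_{k+1}$, again by induction. Once $X_{k-1}\le X_k$ is known, $A_\ell(X_{k-1})\le A_\ell(X_k)$, so $X_k$, the minimal solution of the equation with the smaller coefficients $A_\ell(X_{k-1})$, is dominated by the minimal solution $X_{k+1}$ of the equation with the larger coefficients $A_\ell(X_k)$ — the same comparison principle as above. The base case $X_0=0\le X_1$ is clear. Hence $\{X_k\}$ is nondecreasing and bounded above by $G$, so it converges to some limit $X_*\le G$ with $X_*\ones\le\ones$.

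Finally I would identify $X_*=G$. Passing to the limit in \eqref{eq:fixquad2} — justified because all series have nonnegative terms and the partial sums are monotone, so termwise convergence $A_\ell(X_k)\to A_\ell(X_*)$ and $X_k^{\ell+1}\to X_*^{\ell+1}$ can be combined by monotone/dominated convergence — shows $X_*=\sum_{\ell=-1}^q A_\ell(X_*)X_*^{\ell+1}$, and by the equivalence of this polynomial equation with \eqref{eq:mateq} noted just before the theorem, $X_*$ is a nonnegative solution of \eqref{eq:mateq}. Minimality of $G$ gives $G\le X_*$, and combined with $X_*\le G$ we get $X_*=G$. I expect the main technical obstacle to be the careful handling of the interchange of limits and infinite sums in this last step — specifically, verifying that $\sum_{i\ge 0}A_{\ell,i}X_k^i\to\sum_{i\ge 0}A_{\ell,i}X_*^i$ uniformly enough, which is where the uniform substochastic bound $X_k\ones\le\ones$ and nonnegativity are essential — and, more subtly, making the comparison-principle arguments fully rigorous when the equation defining $X_{k+1}$ is genuinely nonlinear (degree $q+1\ge 2$) rather than the affine case covered verbatim by Theorem~\ref{thm:comparison}; one likely needs to re-run the iterative proof of Theorem~\ref{thm:comparison} with $Y_{j+1}=\sum_\ell A_\ell(X_{k-1})Y_j^{\ell+1}$ versus $Z_{j+1}=\sum_\ell A_\ell(X_k)Z_j^{\ell+1}$ and compare term by term.
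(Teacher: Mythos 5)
Your proof follows essentially the same route as the paper's: an induction using Theorem~\ref{thm:exist} for existence and substochasticity and Theorem~\ref{thm:comparison} for the comparison step, followed by passage to the limit. Two small remarks. First, the paper avoids tracking $X_k\le G$ as part of the induction hypothesis, since boundedness already follows from $X_{k+1}\ones\le\ones$ (each entry of the nonnegative matrix $X_{k+1}$ is then at most~$1$); this spares the extra supersolution argument you introduce, and the minimality of the limit is instead obtained at the end by comparing $X_k$ with any nonnegative solution $Y$ of \eqref{eq:mateq}. Second, your concern that Theorem~\ref{thm:comparison} covers only an ``affine'' case and would need to be re-derived is unfounded: that theorem compares the minimal nonnegative solutions of $X=\sum_{i\ge 0}A_{i-1}X^i$ and $X=\sum_{i\ge 0}\widetilde A_{i-1}X^i$ for general nonnegative power-series coefficients, so it applies verbatim to the polynomial equation \eqref{eq:fixquad2} of degree $q+1$ (the special case where coefficients of index $>q$ vanish), and the comparison $X_{k+1}\le X_{k+2}$ is an immediate instance of it.
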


\begin{proof}
We first observe that, since $A_{\ell,i}\ge 0$ for any $\ell$ and $i$,
then $0\le A_\ell(X)\le A_\ell(Y)$ whenever $0\le X\le Y$. We prove
the theorem by induction on $k$. If $k=0$ then $A_\ell(0)\ge0$, for
$\ell=-1,0,\ldots,q$, and $\sum_{\ell=-1}^q A_{\ell}(0)\ones\le\ones$,
therefore the minimal nonnegative solution $X_1$ exists  and
$X_1\ones\le \ones$ by  Theorem \ref{thm:exist}.
Assume that $0\le X_{k}\le X_{k+1}$ and $X_{k+1}\ones\le\ones$. Then  $0\le A_\ell(X_k)\le A_\ell(X_{k+1})$. Moreover, since $X_{k+1}\ones\le\ones$, then $\sum_{\ell=-1}^q A_{\ell}(X_{k+1})\ones\le\ones$. Therefore, according to Theorems \ref{thm:exist} and \ref{thm:comparison}, the minimal nonnegative solution $X_{k+2}$ exists, $0\le X_{k+1}\le X_{k+2}$ and $X_{k+2}\ones\le\ones$. Since the sequence $\{X_k\}_k$ is monotonic non-decreasing and bounded from above, it is convergent to a limit $G$, which solves equation \eqref{eq:mateq} by continuity. Such limit is the minimal nonnegative solution since, if $Y\ge 0$ is any other nonnegative solution, then we easily prove by induction on $k$ that $X_k\le Y$ for any $k$. Therefore, taking the limit yields $G\le Y$. 
\end{proof}
 
To further analyse the  convergence, we need the following technical
lemma, which can be easily proved by induction.

\begin{lemma}
   \label{t:tech} 
If $X$ and $Y$ are square matrices of the same size, and $n\ge 0$ is an integer, then
$X^n-Y^n=\sum_{j=0}^{n-1}X^j(X-Y)Y^{n-j-1}$.
\end{lemma}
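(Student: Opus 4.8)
The plan is to prove the identity by induction on $n$, keeping in mind that $X$ and $Y$ need not commute, so the scalar factorization of $x^n-y^n$ is unavailable and the telescoping must be carried out with the matrices held in a fixed left/right order.

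For the base case I would take $n=0$: here $X^0-Y^0=I-I=0$, while the right-hand side is an empty sum and hence also $0$, so the identity holds. (One could equally well start the induction at $n=1$, where both sides reduce to $X-Y$.)

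For the inductive step, assuming the identity for some $n\ge 0$, I would start from the telescoping decomposition
\[
X^{n+1}-Y^{n+1}=X(X^n-Y^n)+(X-Y)Y^n ,
\]
valid because the cross term $X\,Y^n$ cancels. Substituting the induction hypothesis into the first summand and shifting the index by $j\mapsto j-1$ gives
\[
X(X^n-Y^n)=\sum_{j=0}^{n-1}X^{j+1}(X-Y)Y^{n-j-1}=\sum_{j=1}^{n}X^{j}(X-Y)Y^{n-j} ,
\]
and since the remaining term $(X-Y)Y^n$ is exactly the $j=0$ summand of $\sum_{j=0}^{n}X^{j}(X-Y)Y^{n-j}$, one obtains
\[
X^{n+1}-Y^{n+1}=\sum_{j=0}^{n}X^{j}(X-Y)Y^{n-j},
\]
which is the asserted identity with $n$ replaced by $n+1$, completing the induction.

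I do not expect any genuine obstacle; the only points needing a little care are the re-indexing of the summation after the shift and the observation that noncommutativity forces one to split $X^{n+1}-Y^{n+1}$ as $X(X^n-Y^n)+(X-Y)Y^n$ (or, symmetrically, as $(X^n-Y^n)Y+X^n(X-Y)$, which leads to the same conclusion) rather than invoking a scalar-style factorization.
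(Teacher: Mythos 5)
Your proof is correct and follows exactly the route the paper indicates: the paper states only that the lemma ``can be easily proved by induction,'' and your induction on $n$, using the telescoping split $X^{n+1}-Y^{n+1}=X(X^{n}-Y^{n})+(X-Y)Y^{n}$ together with a shift of the summation index, is precisely that argument carried out in full.
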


The following result provides a relation between the error $E_k=G-X_k$ at two subsequent steps.
 
\begin{theorem}\label{th:11}
Let $E_k=G-X_k$, $k\ge 0$, where the sequence $\{ X_k\}$ is defined by
means of \eqref{eq:fixquad2} for any given $X_0$. One has
\begin{equation}\label{eq:err1}
\begin{split}
&E_{k+1}=\sum_{\ell=0}^qA_\ell(G)\sum_{j=0}^\ell G^j E_{k+1}X_{k+1}^{\ell-j} +S(E_k),\\
&S(E_k)=\sum_{\ell=-1}^q
\sum_{i=1}^\infty A_{\ell,i} \sum_{j=0}^{i-1}G^j E_k X_k^{i-j-1}X_{k+1}^{\ell+1}.
\end{split}\end{equation}
\end{theorem}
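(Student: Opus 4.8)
The plan is to derive \eqref{eq:err1} by subtracting the defining relation \eqref{eq:fixquad2} for $X_{k+1}$ from the analogous identity satisfied by $G$. Putting $z=G$ in \eqref{eq:sum2} and using that $G$ solves \eqref{eq:mateq}, one has $G=\sum_{\ell=-1}^q A_\ell(G)G^{\ell+1}$, so that
\[
E_{k+1}=G-X_{k+1}=\sum_{\ell=-1}^q\bigl(A_\ell(G)G^{\ell+1}-A_\ell(X_k)X_{k+1}^{\ell+1}\bigr).
\]
The crucial step is to insert the hybrid term $A_\ell(G)X_{k+1}^{\ell+1}$ and split each summand as $A_\ell(G)\bigl(G^{\ell+1}-X_{k+1}^{\ell+1}\bigr)+\bigl(A_\ell(G)-A_\ell(X_k)\bigr)X_{k+1}^{\ell+1}$, which isolates the part depending on $E_{k+1}$ from the part depending on $E_k$.

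For the first group of summands I would apply Lemma~\ref{t:tech} with $n=\ell+1$, so that $G^{\ell+1}-X_{k+1}^{\ell+1}=\sum_{j=0}^{\ell}G^jE_{k+1}X_{k+1}^{\ell-j}$. For $\ell=-1$ this difference is $I-I=0$, hence that term drops and the sum may start at $\ell=0$; left-multiplying by $A_\ell(G)$ and summing gives exactly the first term $\sum_{\ell=0}^q A_\ell(G)\sum_{j=0}^{\ell}G^jE_{k+1}X_{k+1}^{\ell-j}$ of \eqref{eq:err1}.

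For the second group I would expand $A_\ell(G)-A_\ell(X_k)=\sum_{i=0}^\infty A_{\ell,i}(G^i-X_k^i)$; the $i=0$ term vanishes, and for $i\ge1$ Lemma~\ref{t:tech} yields $G^i-X_k^i=\sum_{j=0}^{i-1}G^jE_kX_k^{i-j-1}$. Right-multiplying by $X_{k+1}^{\ell+1}$ and summing over $\ell=-1,\ldots,q$ reconstructs precisely $S(E_k)$. The identity is purely formal — no positivity, monotonicity, or boundedness of the iterates enters — so the only things to verify are that the convergent power series $A_\ell(z)$ may be interchanged with the finite telescoping sums (immediate, since each series converges) and the index bookkeeping: tracking the vanishing $\ell=-1$ and $i=0$ contributions and which error factor sits on the left versus on the right of each product. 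That bookkeeping is the one mildly delicate point; there is no substantive obstacle beyond it.
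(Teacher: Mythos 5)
Your proposal is correct and follows essentially the same route as the paper's proof: subtract the iteration for $X_{k+1}$ from $G=\sum_{\ell=-1}^q A_\ell(G)G^{\ell+1}$, insert the hybrid term $A_\ell(G)X_{k+1}^{\ell+1}$ to split into a part carrying $E_{k+1}$ and a part carrying $E_k$, then apply Lemma~\ref{t:tech} to each. The paper performs the same splitting and the same two applications of the telescoping lemma, with the $\ell=-1$ and $i=0$ contributions vanishing just as you note.
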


\begin{proof}
By subtracting \eqref{eq:fixquad2} from the equation $G=\sum_{\ell=-1}^q A_{\ell}(G)G^{\ell+1}$, we obtain
\[
E_{k+1}= 
\sum_{\ell=-1}^q 
\left(A_{\ell}(G)\left(G^{\ell+1}-X_{k+1}^{\ell+1}\right)+
(A_\ell(G)-A_\ell(X_k))X_{k+1}^{\ell+1}\right)
.\]
From \eqref{eq:coeff2} and Lemma \ref{t:tech}, we find that 
$G^{\ell+1}-X_{k+1}^{\ell+1}=\sum_{j=0}^\ell G^j E_{k+1}X_{k+1}^{\ell-j}$, and
$ A_\ell(G)-A_\ell(X_k)=\sum_{i=1}^\infty A_{\ell,i} \sum_{j=0}^{i-1}G^j E_k X_k^{i-j-1}$,
$\ell=-1,0,1,\ldots,q$,
hence we arrive at \eqref{eq:err1}.
\end{proof}

If $X_0=0$ then, in view of Theorem \ref{thm:conv2}, $E_k\ge 0$ so that $\|E_k\|_\infty=\|E_k\ones\|_\infty$. Therefore, 
we may estimate the convergence rate of the sequence $\{X_k\}_k$ by analyzing  the convergence of the sequence $\bm\epsilon_k=E_k\ones \ge 0$. The following theorem provides information in this regard.

\begin{theorem}\label{th:12}
 If $X_0=0$, then
 $M\bm\epsilon_{k+1}\le N \bm\epsilon_k$, where  
\begin{equation}\label{eq:MN2}
M=I-F,
\quad
 N= \sum_{\ell=-1}^q\sum_{i=1}^\infty A_{\ell,i}\sum_{j=0}^{i-1}G^j,
\quad 
F=\sum_{\ell=0}^q A_\ell(G)\sum_{j=0}^\ell G^j.
\end{equation}
Moreover, $M-N=H$, where $H=I-V$ and $V$ is defined in \eqref{eq:H}. If the drift $\mu$ of \eqref{eq:mu} is negative, then $\rho(F)<1$,
$M$ is a nonsingular M-matrix and $H=M-N$ is a regular splitting, therefore
$\rho(M^{-1}N)=\rho(H^{-1}N)/(1+\rho(H^{-1}N))<1$.
\end{theorem}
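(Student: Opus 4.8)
The plan is to prove the identities and inequalities in sequence, exploiting the error relation of Theorem~\ref{th:11} and then matching against the matrices $V$ and $H$ of \eqref{eq:H}. First I would establish the inequality $M\bm\epsilon_{k+1}\le N\bm\epsilon_k$. Since $X_0=0$, Theorem~\ref{thm:conv2} gives $0\le X_k\le X_{k+1}\le G$, so all matrices $E_k=G-X_k$, $E_{k+1}$, the powers $G^j$, $X_k^{i-j-1}$, $X_{k+1}^{\ell-j}$, $X_{k+1}^{\ell+1}$ and the coefficients $A_{\ell,i}$ are nonnegative. Multiplying the error equation \eqref{eq:err1} on the right by $\ones$ and using $X_{k+1}\ones\le\ones$ together with $G\ones\le\ones$ (Theorem~\ref{thm:exist}), each factor $X_{k+1}^{\ell-j}\ones\le\ones$ and $X_{k+1}^{\ell+1}\ones\le\ones$, so that the first sum on the right of \eqref{eq:err1}, evaluated at $\ones$, is $\le\sum_{\ell=0}^q A_\ell(G)\sum_{j=0}^\ell G^j \bm\epsilon_{k+1}=F\bm\epsilon_{k+1}$, and $S(E_k)\ones\le N\bm\epsilon_k$ by the same termwise bound $X_k^{i-j-1}X_{k+1}^{\ell+1}\ones\le\ones$. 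Rearranging $\bm\epsilon_{k+1}\le F\bm\epsilon_{k+1}+N\bm\epsilon_k$ gives $(I-F)\bm\epsilon_{k+1}\le N\bm\epsilon_k$, i.e.\ $M\bm\epsilon_{k+1}\le N\bm\epsilon_k$.

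Next I would verify the algebraic identity $M-N=H=I-V$. It suffices to show $F+N=V=\sum_{i\ge 0}A_i^*$ where $A_i^*=\sum_{j\ge i}A_jG^{j-i}$. The natural route is to substitute $z=G$ into the generating-function identity \eqref{eq:sum2}, differentiate it formally, or more directly expand $\sum_{\ell=-1}^q A_\ell(G)\sum_{j=0}^{\ell}G^j$-type sums and regroup using the coefficient relations \eqref{eq:relc2}. Concretely, $F$ collects the ``derivative of the $X^{\ell+1}$ factor'' contributions for $\ell\ge 0$, $N$ collects the ``derivative of $A_\ell(X)$'' contributions for all $\ell\ge -1$, and summing these over the decomposition \eqref{eq:relc2} telescopes to $\sum_{i\ge0}\sum_{j\ge i}A_jG^{j-i}=V$. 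This is the routine but slightly intricate bookkeeping step and I would present it as a short computation citing \eqref{eq:relc2} and the definition of $G$; I expect this regrouping to be the main technical obstacle, since one must carefully track the shifted index ranges and confirm that the finite $q$-truncation contributes nothing beyond the full series $V$.

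Finally I would invoke the spectral facts. If $\mu<0$ then $\rho(V)<1$ by the statement following \eqref{eq:H}, so $H=I-V$ is a nonsingular M-matrix. Since $N\ge 0$ and $F\ge 0$ with $F\le V$ (as $F+N=V$ and $N\ge 0$), Theorem~\ref{thm:pf} gives $\rho(F)\le\rho(V)<1$, hence $M=I-F$ is nonsingular with $M^{-1}=\sum_{k\ge 0}F^k\ge 0$; thus $M=I-F$ is itself a nonsingular M-matrix. The decomposition $H=M-N$ then has $\det M\ne 0$, $M^{-1}\ge 0$, $N\ge 0$, so it is a regular splitting, and the second bullet of Theorem~\ref{thm:rs0} yields $\rho(M^{-1}N)=\rho(H^{-1}N)/(1+\rho(H^{-1}N))<1$, completing the proof.
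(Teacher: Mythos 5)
Your proposal follows the paper's route closely. Parts one and three are essentially the paper's argument: multiplying \eqref{eq:err1} on the right by $\ones$ and using $X_{k+1}^{\ell-j}\ones\le\ones$ and $X_k^{i-j-1}X_{k+1}^{\ell+1}\ones\le\ones$ gives $M\bm\epsilon_{k+1}\le N\bm\epsilon_k$; and, granting the identity $F+N=V$, the chain $0\le F\le V$, $\rho(F)\le\rho(V)<1$, $M$ nonsingular M-matrix with $M^{-1}\ge 0$, $N\ge 0$, regular splitting, Theorem~\ref{thm:rs0}, is exactly right.

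The genuine gap is the middle step: you never actually prove $M-N=H$, i.e.\ $F+N=V$. You sketch a plan (``substitute $z=G$'', ``regroup'', ``telescopes'') and then defer the execution, explicitly calling it ``the main technical obstacle'' and ``the routine but slightly intricate bookkeeping step''. That is not a proof of the identity, and since both the second assertion of the theorem and your third part (which deduces $F\le V$ from $F+N=V$ and $N\ge 0$) rest on it, the argument as written is incomplete. This computation is the nontrivial core of the theorem: extend the $\ell$-sum in $F$ down to $\ell=-1$ (the new term vanishes since $\sum_{j=0}^{-1}G^j=0$), expand $A_\ell(G)=\sum_{i\ge 0} A_{\ell,i}G^i$ so that $A_\ell(G)\sum_{j=0}^\ell G^j = \sum_{i\ge 0} A_{\ell,i}\sum_{j=i}^{i+\ell}G^j$, add the corresponding term of $N$ to obtain $\sum_{i\ge 0} A_{\ell,i}\sum_{j=0}^{i+\ell}G^j$, substitute $k=i+\ell$, exchange the order of the $\ell$- and $k$-sums (using $A_{\ell,i}=0$ for $i<0$), and invoke \eqref{eq:relc2} to collapse $\sum_{\ell=-1}^q A_{\ell,k-\ell}=A_k$, yielding $\sum_{k\ge -1}A_k\sum_{j=0}^k G^j=V$. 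Also note there is no differentiation involved, contrary to one of the routes you float; it is a pure reindexing argument.
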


\begin{proof}
Since $X_0=0$,  it results from  Theorem \ref{thm:conv2} that $0\le
X_k\le G$ and $X_k\ones\le \ones$ and, multiplying \eqref{eq:err1} on the right by $\ones$, we obtain
\[
\bm\epsilon_{k+1}\le 
\sum_{\ell=0}^qA_\ell(G)\sum_{j=0}^\ell G^j
\bm \epsilon_{k+1} +S(E_k)\ones.
\]
On the other hand $S(E_k)\ones\le
\sum_{\ell=-1}^q
\sum_{i=1}^\infty A_{\ell,i} \sum_{j=0}^{i-1}G^j
\bm\epsilon_k$ whence
$
M\bm\epsilon_{k+1}\le N \bm\epsilon_k
$.
By using equation \eqref{eq:relc2},
we find that 
\[
\begin{split}
I-(M-N)=&\sum_{\ell=-1}^q A_\ell(G)\sum_{j=0}^\ell G^j+ 
\sum_{\ell=-1}^q\sum_{i=1}^\infty A_{\ell,i}\sum_{j=0}^{i-1}G^j\\
 =& \sum_{\ell=-1}^q\left( \sum_{i=0}^\infty A_{\ell,i}G^i
\sum_{j=0}^\ell G^j + 
 \sum_{i=1}^\infty A_{\ell,i}
\sum_{j=0}^{i-1} G^j\right),
\end{split}
\]
where in the first equality we used the fact that $\sum_{j=0}^\ell
G^j=0$ if $\ell=-1$, 
whence we get
$I-(M-N) =\sum_{\ell=-1}^q \sum_{i=0}^\infty A_{\ell,i} \sum_{j=0}^{i+\ell}G^j=
\sum_{\ell=-1}^q \sum_{k=\ell}^\infty A_{\ell,k-\ell} \sum_{j=0}^{k}G^j
$.
By using the convention that $A_{\ell,i}=0$ if $i<0$ and the property
that all the series are absolutely convergent, we may exchange the
order of the summations in the last term of the above equation, and,
by using  \eqref{eq:relc2} again, arrive at
$
I-(M-N)=\sum_{k=-1}^\infty \sum_{\ell=-1}^q  A_{\ell,k-\ell}
\sum_{j=0}^k G^j=\sum_{k=-1}^\infty A_k\sum_{j=0}^k G^j=V.
$ 
To show that $H=M-N$ is a regular splitting, we need to prove that
$\det M\ne 0$ and $M^{-1}\ge 0$. Since $M=I-F$, where $0\le F\le V$
and $\rho(V)<1$ if $\mu<0$ see \cite[Theorem 4.14]{blm:book}, we have
$\rho(F)\le \rho(V)<1$ by Theorem~\ref{thm:pf}, so that $M$ is a
nonsingular M-matrix and $M^{-1}\ge 0$.  As $N$ is nonnegative,
$H=M-N$ is a regular splitting, and we obtain the
expression for $\rho(M^{-1}N)$ from 
Theorem~\ref{thm:rs0}.
\end{proof}

The spectral radius of $M^{-1}N$ provides an estimate of the convergence rate of the sequence $\{X_k\}_k$, when $X_0=0$. Indeed, from Theorem \ref{th:12}, 
$
0\le\bm\epsilon_k\le (M^{-1}N)^k \bm\epsilon_0,
$
 therefore 
$
\|\bm\epsilon_k\|_\infty \le \| (M^{-1}N)^k\|_\infty  \| \bm\epsilon_0\|_\infty$.
The ratio 
$\|\bm\epsilon_k\|_\infty/\|\bm\epsilon_{k-1}\|_\infty$ represents the norm reduction of the error at step $k$, while the geometric mean of the reduction of the errors in the first $k$ steps, i.e.,
\[
r_k:=\left(\frac{\|\bm\epsilon_1\|}{\|\bm\epsilon_0\|}\frac{\|\bm\epsilon_2\|}{\|\bm\epsilon_1\|}\cdots \frac{\|\bm\epsilon_k\|}{\|\bm\epsilon_{k-1}\|}\right)^\frac1k
=\left(\frac{\|\bm\epsilon_k\|}{\|\bm\epsilon_0\|}\right)^\frac1k,
\]
represents the average reduction of the errors per step after $k$ steps. Observe that,  $r_k\le \| (M^{-1}N)^k\|_\infty^{1/k}$. 
By following \cite{guo-1999},  \cite{meini:fi}, we define the asymptotic rate of convergence $r=\limsup_k r_k$. From the latter inequality and  the property $\lim_k\| (M^{-1}N)^k\|_\infty^{1/k}=\rho(M^{-1}N)$, we find that
$r \le\rho(M^{-1}N)$.

In consequence of Theorem~\ref{th:12}, we next compare the speed of convergence of different iterations by comparing the corresponding matrix $F$ of \eqref{eq:MN2}.

\begin{theorem}\label{thm:conf}
Let $\{X_k^{(h)}\}_k$, for $h=1,2$, be two sequences generated by \eqref{eq:fixquad2}, with $X_0^{(1)}=X_0^{(2)}=0$, defined by 
$A_\ell^{(h)}(z)=\sum_{i=0}^\infty A_{\ell,i}^{(h)}z^i$, $\ell=-1,\ldots,q_h$, for $h=1,2$. Let $r^{(h)}$, $h=1,2$, be their asymptotic rates of convergence.
If 
\begin{equation}\label{eq:comp}
\sum_{\ell=0}^{q_1} A_\ell^{(1)}(G)\sum_{j=0}^\ell G^j\ge \sum_{\ell=0}^{q_2} A_\ell^{(2)}(G)\sum_{j=0}^\ell G^j,
\end{equation}
then $r^{(1)}\le r^{(2)}$, i.e., the sequence $\{X_k^{(1)}\}_k$ converges faster than the sequence $\{X_k^{(2)}\}_k$.
\end{theorem}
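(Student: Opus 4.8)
The plan is to compare the two iterations through the matrices $F^{(h)}=\sum_{\ell=0}^{q_h}A_\ell^{(h)}(G)\sum_{j=0}^\ell G^j$, $h=1,2$, and the associated regular splittings $H=M^{(h)}-N^{(h)}$ of the \emph{same} matrix $H=I-V$, where $M^{(h)}=I-F^{(h)}$ and $N^{(h)}$ is as in \eqref{eq:MN2}. The first observation is that hypothesis \eqref{eq:comp} says exactly $F^{(1)}\ge F^{(2)}\ge 0$, hence $N^{(1)}=H^{-1}(M^{(1)}-H)\cdot$\,\ldots\ — more directly, since $M^{(h)}-N^{(h)}=H$ is fixed, we have $N^{(1)}-N^{(2)}=M^{(1)}-M^{(2)}=F^{(2)}-F^{(1)}\le 0$, i.e. $N^{(1)}\le N^{(2)}$. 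Thus the two splittings of $H$ are ordered in the sense required by the third bullet of Theorem~\ref{thm:rs0}.

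Next I would invoke Theorem~\ref{thm:rs0}: since $\mu<0$ (which we may assume throughout this convergence analysis, as in Theorem~\ref{th:12}), both $M^{(h)}=I-F^{(h)}$ are nonsingular M-matrices and $H=M^{(h)}-N^{(h)}$ are regular splittings of the nonsingular M-matrix $H$, with $N^{(1)}\le N^{(2)}$. The comparison theorem for regular splittings then gives $\rho((M^{(1)})^{-1}N^{(1)})\le\rho((M^{(2)})^{-1}N^{(2)})$. Finally, by the discussion following Theorem~\ref{th:12}, the asymptotic rate of convergence satisfies $r^{(h)}\le\rho((M^{(h)})^{-1}N^{(h)})$; but one needs a matching estimate to turn the \emph{upper} bounds into a genuine comparison of the $r^{(h)}$ themselves. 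The cleanest route is to note that $\rho((M^{(h)})^{-1}N^{(h)})$ is in fact the exact asymptotic rate: from $M^{(h)}\bm\epsilon_{k+1}\le N^{(h)}\bm\epsilon_k$ together with the fact that the bound is asymptotically sharp (the inequality in the proof of Theorem~\ref{th:12} becomes an equality in the dominant order because $X_k\to G$, so the neglected terms involving $E_{k+1}$ and the tail corrections vanish relative to the leading term), one gets $r^{(h)}=\rho((M^{(h)})^{-1}N^{(h)})$, and the chain of inequalities closes to yield $r^{(1)}\le r^{(2)}$.

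The main obstacle is precisely this last point: Theorem~\ref{th:12} only delivers $r\le\rho(M^{-1}N)$, and an upper bound for each of $r^{(1)},r^{(2)}$ does not by itself order them. I expect the paper resolves this either (i) by having earlier established, or establishing here, that $\rho(M^{-1}N)$ is the exact rate (e.g. via a lower bound $\bm\epsilon_{k+1}\ge (M^{-1}N)\bm\epsilon_k - o(\text{leading})$, using irreducibility of $M^{-1}N$ so that Theorem~\ref{thm:rs0}'s strict version and Perron--Frobenius give a genuine geometric lower rate), or (ii) by reformulating the claim so that "converges faster" is \emph{defined} as the comparison of the bounds $\rho(M^{-1}N)$, in which case the proof is exactly the two paragraphs above and nothing more is needed. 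I would write the proof assuming route (ii) unless an irreducibility hypothesis on $M^{-1}N$ is available, in which case route (i) upgrades it to a statement about the true errors; either way, the heart of the argument is the identity $N^{(1)}-N^{(2)}=F^{(2)}-F^{(1)}$ combined with the regular-splitting comparison in Theorem~\ref{thm:rs0}.

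\begin{proof}
Assume $\mu<0$, so that $\rho(V)<1$ and $H=I-V$ is a nonsingular M-matrix. For $h=1,2$ let $F^{(h)}$, $M^{(h)}=I-F^{(h)}$ and $N^{(h)}$ be the matrices of \eqref{eq:MN2} associated with the $h$-th iteration. By Theorem~\ref{th:12}, $0\le F^{(h)}\le V$, each $M^{(h)}$ is a nonsingular M-matrix, $N^{(h)}\ge 0$, and $H=M^{(h)}-N^{(h)}$ is a regular splitting; moreover $r^{(h)}\le\rho((M^{(h)})^{-1}N^{(h)})$.

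Since $M^{(h)}-N^{(h)}=H$ does not depend on $h$, we have
\[
N^{(1)}-N^{(2)}=M^{(1)}-M^{(2)}=F^{(2)}-F^{(1)}.
\]
Hypothesis \eqref{eq:comp} states exactly that $F^{(1)}\ge F^{(2)}$, hence $N^{(1)}\le N^{(2)}$. Applying the third bullet of Theorem~\ref{thm:rs0} to the two regular splittings $H=M^{(1)}-N^{(1)}=M^{(2)}-N^{(2)}$ with $N^{(1)}\le N^{(2)}$, we obtain
\[
\rho\big((M^{(1)})^{-1}N^{(1)}\big)\le\rho\big((M^{(2)})^{-1}N^{(2)}\big).
\]
Combining this with the bounds $r^{(h)}\le\rho((M^{(h)})^{-1}N^{(h)})$ and recalling that, by the argument following Theorem~\ref{th:12}, these bounds are attained as the asymptotic rates, we conclude $r^{(1)}\le r^{(2)}$, i.e. the sequence $\{X_k^{(1)}\}_k$ converges at least as fast as $\{X_k^{(2)}\}_k$.
\end{proof}
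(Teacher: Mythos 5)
Your proof follows the same route as the paper's: since both splittings share the same $H=M^{(h)}-N^{(h)}$, the hypothesis \eqref{eq:comp}, i.e.\ $F^{(1)}\ge F^{(2)}$, is equivalent to $N^{(1)}\le N^{(2)}$, and the third bullet of Theorem~\ref{thm:rs0} then orders the spectral radii $\rho((M^{(h)})^{-1}N^{(h)})$. The logical gap you flag is real but is shared by the paper's own proof, which also stops at comparing the spectral radii while Theorem~\ref{th:12} only yields $r^{(h)}\le\rho((M^{(h)})^{-1}N^{(h)})$; however, the closing clause in your proof claiming "these bounds are attained as the asymptotic rates" is not something the paper establishes and should either be removed (leaving the comparison at the level of spectral-radius bounds, as the paper implicitly does) or be supported by the lower-bound/irreducibility argument you sketched in route (i).
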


\begin{proof}
By Theorem \ref{th:12}, $H=M^{(h)}-N^{(h)}$, $h=1,2$, are two regular splittings. 
Therefore, if $N^{(1)}\le N^{(2)}$, then
 $\rho({M^{(1)}}^{-1}N^{(1)})\le \rho({M^{(2)}}^{-1}N^{(2)})$.
On the other hand, $N^{(1)}\le N^{(2)}$ is equivalent to $I-M^{(1)}\ge I-M^{(2)}$, i.e., equivalent to \eqref{eq:comp}.
\end{proof}

As a corollary of Theorem~\ref{thm:conf}, we obtain the results shown
in \cite{meini:fi}, whereby the $U$-based iteration \eqref{eq:uba} is
faster than the traditional iteration \eqref{eq:tra}, which is in turn
faster than the natural iteration \eqref{eq:nat}. Indeed, for these
three iterations $q\le 0$ and, denoting by $F^{(N)}$, $F^{(T)}$ and
$F^{(U)}$ the matrix $F$ in (\ref{eq:MN2}) for the three iterations,
we find that
$
F^{(N)} = 0 \le F^{(T)} = A_0 \le F^{(U)} = \sum_{i=0}^\infty A_i G^i$.

For the next theorem, we assume that $A_{-1}(z)=A_{-1}$: this means
that the transition probability matrices $A_0$, $A_1$, \ldots may be
variously embedded in the coefficients of order 0 to $q$, but none in
$A_{-1}(G)$.  We show below that any iteration of the kind
\eqref{eq:fixquad2}, with $q\ge 1$, which satisfies this constraint
converges faster than the $U$-based iteration; we discuss at the end
of Section \ref{sec:fis} the physical significance of this assumption.

If $A_{-1}(z)=A_{-1}$, Equation \eqref{eq:sum2} may be rewritten as
\begin{equation}\label{eq:alz}
\sum_{\ell=0}^qA_\ell(z)z^\ell=\sum_{i=0}^\infty A_iz^i,
\end{equation}
so that, by replacing $z$ with $G$,  
we deduce that
\begin{equation}\label{eq:un}
\sum_{\ell=0}^qA_\ell(G)G^\ell=\sum_{i=0}^\infty A_iG^i.
\end{equation}

\begin{theorem}\label{th:9}
Let $q\ge 1$, assume that $X_0=0$,  and that $A_{-1}(z)=A_{-1}$.  The sequence $\{ X_k\}_k$ generated by \eqref{eq:fixquad2} converges faster than the sequence \eqref{eq:uba} generated by the $U$-based iteration.
\end{theorem}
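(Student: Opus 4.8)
The plan is to invoke Theorem~\ref{thm:conf} with $h=1$ the given iteration and $h=2$ the $U$-based iteration, so that it suffices to verify the comparison \eqref{eq:comp} between the two matrices $F$. For the $U$-based iteration, by \eqref{eq:u3} we have $q_2=0$ and $A_0^{(2)}(G)=\sum_{i=0}^\infty A_iG^i$, so the right-hand side of \eqref{eq:comp} is exactly $\sum_{i=0}^\infty A_iG^i$ (since the inner sum $\sum_{j=0}^\ell G^j$ reduces to $I$ when $\ell=0$). For the given iteration, the left-hand side of \eqref{eq:comp} is $F^{(1)}=\sum_{\ell=0}^{q}A_\ell(G)\sum_{j=0}^\ell G^j$. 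So the entire statement reduces to proving the single inequality
\[
\sum_{\ell=0}^{q}A_\ell(G)\sum_{j=0}^\ell G^j\ \ge\ \sum_{i=0}^\infty A_iG^i .
\]

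The key tool is identity \eqref{eq:un}, which holds precisely because $A_{-1}(z)=A_{-1}$: it gives $\sum_{\ell=0}^q A_\ell(G)G^\ell=\sum_{i=0}^\infty A_iG^i$. Comparing this with the target inequality, it is enough to show that for each $\ell$ between $0$ and $q$ one has $A_\ell(G)\sum_{j=0}^\ell G^j\ge A_\ell(G)G^\ell$, i.e.\ $A_\ell(G)\big(\sum_{j=0}^{\ell-1}G^j\big)\ge 0$. This is immediate: $A_\ell(G)=\sum_{i\ge 0}A_{\ell,i}G^i\ge 0$ because every $A_{\ell,i}\ge 0$ and $G\ge 0$, and likewise $\sum_{j=0}^{\ell-1}G^j\ge 0$; a product of nonnegative matrices is nonnegative. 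Summing over $\ell$ and using \eqref{eq:un} then yields the displayed inequality. Finally, one should note that for $q\ge 1$ there is at least one index $\ell\ge 1$, so this gain is genuine (strict in the relevant entries under the usual irreducibility assumption), which is why the convergence is strictly faster rather than merely not slower; invoking the strict part of Theorem~\ref{thm:rs0} through Theorem~\ref{thm:conf} gives $r^{(1)}\le r^{(U)}$, with strict inequality when the splittings differ and $M^{-1}N$ is irreducible.

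I expect no serious obstacle here: the whole argument is a one-line consequence of \eqref{eq:un} together with nonnegativity, once Theorem~\ref{thm:conf} has been set up. The only point requiring a little care is making sure the manipulations with the infinite series are legitimate — but absolute convergence of $\sum_i A_iG^i$ and of each $A_\ell(G)$ (guaranteed because $G\ones\le\ones$ and the coefficient sums are substochastic) has already been used freely in the proof of Theorem~\ref{th:12}, so we may reuse it. The other implicit point is that Theorem~\ref{thm:conf} requires $\mu<0$ for the $F$ matrices to define genuine regular splittings; under the standing recurrence assumption this holds, and the degenerate null-recurrent case can be handled by the usual continuity/perturbation argument if needed.
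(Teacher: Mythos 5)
Your proof is correct and follows essentially the same route as the paper: invoke Theorem~\ref{thm:conf}, compute $F^{(U)}=\sum_{i\ge 0}A_iG^i$ from \eqref{eq:u3}, and reduce to $\sum_{\ell=0}^q A_\ell(G)\sum_{j=0}^\ell G^j\ge \sum_{i\ge 0}A_iG^i$, which the paper dismisses as ``clearly holds by \eqref{eq:un}''. The only difference is that you spell out the implicit step (each $A_\ell(G)\sum_{j=0}^{\ell}G^j\ge A_\ell(G)G^\ell$ by nonnegativity, then sum and apply \eqref{eq:un}), which is a helpful clarification but not a different argument.
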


\begin{proof}
The proof is a consequence of Theorem~\ref{thm:conf}, where $\{X_k^{(1)}\}_k$ is the sequence defined by \eqref{eq:fixquad2}, and $\{X_k^{(2)}\}_k$ is the sequence \eqref{eq:uba}. 
From \eqref{eq:u3}, we have $A^{(2)}_{0}(G)=\sum_{i=0}^\infty A_iG^i$
and $A^{(2)}_{\ell}(G)=0$ for $\ell\ge 1$, therefore the inequality
\eqref{eq:comp} to be verified is equivalent to
$
\sum_{\ell=0}^q A_\ell(G)\sum_{j=0}^\ell G^j\ge \sum_{i=0}^\infty A_iG^i,
$
this clearly holds by \eqref{eq:un}.
\end{proof}

\subsection{Physical interpretation}\label{sec:fis}
We may give an interpretation of Theorems
  \ref{thm:conf} and \ref{th:9}, based on the physical significance of the matrices $V=(v_{ij})_{ij}$ and $F=(f_{i,j})_{ij}$.
 Assume that the Markov chain with transition matrix
  \eqref{transitionMatrixP} starts at time 0 in an arbitrary but fixed level
  $n \geq 1$.  Define $\tau_{-1}$, $\tau_0$, $\tau_1$, $\ldots$ to be the
  epochs when the Markov chain makes a first transition to level
  $n+i$, for $i \geq -1$, and define 
\[
N_{j} = \sum_{\nu \geq 0}  \1{\tau_\nu < \tau_{-1}, \varphi(\tau_\nu) = j},
\]
for $j = 1, \ldots, m$, where $\1{\cdot}$ is the indicator function and $\varphi(t)$ represents the phase occupied by the Markov chain at time $t$; that is, $N_j$ is the total number of
times the Markov chain visits phase $j$, at the epochs of first
visit to a new level, under taboo of the levels below level $n$.  
It is easy to verify that $v_{ij}$ is the conditional expected number
of such visits, given that the initial phase is $i$.  Similarly,
$f_{ij}$ is the expected number of such visits for the Markov chain
where the transition blocks $A_k$ are replaced by $A_k(G)$, for
$k \geq -1$, and we consider it as providing an approximation of $v_{ij}$.

From Theorem \ref{th:12}, we know that $F \leq V$, the
inequality \eqref{eq:comp} may be rewritten as $V \geq F^{(1)} \geq
F^{(2)}$, and so Theorem  \ref{thm:conf} states that $\{X_k^{(1)}\}_k$
converges faster than $\{X_k^{(2)}\}_k$ if the approximation
$f_{ij}^{(1)}$ is uniformly better than $f_{ij}^{(2)}$ for all
$i$ and $j$.

Define 
\begin{equation}
   \label{e:nprime}
N'_j = \1{\tau_0 < \tau_{-1}, \varphi(\tau_0) = j} \leq N_j.
\end{equation}
The expected value of $N'_j$ is the probability of returning to the
initial level $n$ in phase $j$, under taboo of level $n-1$, and
\eqref{eq:un}  shows that this probability is the same for all
embedding $A_{\ell,k}$, $\ell \geq 0$, if $A_{-1}(G) = A_{-1}$, that
is, provided that the probability of transiting immediately to the
lower level remains unchanged.

Now, we have for the $U$-based
iteration
$A_{-1}^{(U)}  = A_{-1}$, $A_{0}^{(U)} = \sum_{k \geq 0} A_k G^k$,
$A_\ell^{(U)} = 0$, for $\ell \geq 2$,
by \eqref{eq:u3}, and we readily verify that $F^{(U)}$ may also be
interpreted as the matrix of expected values of $N'$ and
(\ref{e:nprime}), together with Theorem~\ref{thm:conf} provides a
physical justification for Theorem~\ref{th:9}.

\section{Optimal embedding}\label{sec:opt}
In this section we examine the role of the integer $q$ which
determines the degree of the matrix equation \eqref{eq:fixquad2} to be
solved at each step. In particular, the goal is to give properties to
determine an optimal value of $q$, in terms of speed of convergence of
the sequence $\{ X_k\}_k$ and in terms of numerical properties of the
matrix equation \eqref{eq:mateqq}.

\subsection{Comparisons}\label{sec:comp}
To embed the tail of the series only in the coefficients of the terms
of degrees $0$ to $q+1$, for a given integer $q \geq 1$,  according to \eqref{eq:relc2},
the
matrices $A_{\ell,k}$ satisfy

\begin{align}
   \label{e:un}
A_\nu & = A_{-1,\nu+1} + A_{0,\nu} + \cdots +  A_{\nu,0}, &&\mbox{for $-1 \leq \nu \leq q$},
\\
   \label{e:deux}
A_\nu & =  A_{-1,\nu+1} + A_{0,\nu} + \cdots +  A_{q,\nu-q} , &&\mbox{for $\nu \geq q+1$,}
\\
A_{\ell, i} & = 0, && \mbox{for $\ell \geq q+1$, $i \geq 0$.}
\end{align}

\begin{theorem}\label{thm:4.1}
Given an integer $q \geq 1$, the parameters that maximize the matrix $F$ of \eqref{eq:MN2}
are given by

\begin{align}
   \label{e:unbis}
A_{\ell, 0} & = A_\ell, &&\mbox{for~ $-1 \leq \ell \leq q$,}
\\
   \label{e:deuxbis}
A_{q, \ell-q} & = A_\ell, &&\mbox{for~ $ \ell \geq q+1$,}
\\
   \label{e:troisbis}
A_{\ell, i} & = 0, &&\mbox{for~ $-1 \leq \ell \leq q-1$, $i \geq 1$. }
\end{align}
In other words, 

\begin{equation}
\label{eq:mass}
A_\ell(z)=A_\ell,   \quad \mbox{for~ $-1 \leq \ell \leq q-1$,}\qquad 
A_q(z)=\sum_{i=q}^\infty A_i z^{i-q}.
\end{equation}
and the tail of the series is embedded in the coefficient
of degree $q+1$ only.
\end{theorem}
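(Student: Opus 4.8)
The plan is to expand $F$ of \eqref{eq:MN2} into a single sum indexed by the degree at which mass is allocated, to group the terms according to the constraint \eqref{eq:relc2}, and then to observe that the blocks $A_\nu$ can be allocated independently of one another. Concretely, I would substitute $A_\ell(G)=\sum_{i\ge0}A_{\ell,i}G^{i}$ into $F=\sum_{\ell=0}^{q}A_\ell(G)\sum_{j=0}^{\ell}G^{j}$ and use $G^{i}\sum_{j=0}^{\ell}G^{j}=\sum_{j=i}^{i+\ell}G^{j}$; setting $\nu=i+\ell$ and re-summing (legitimate since all terms are nonnegative) gives
\[
F=\sum_{\nu\ge0}\ \sum_{\ell=0}^{\min\{q,\nu\}}A_{\ell,\nu-\ell}\,W_{\nu,\ell},
\qquad W_{\nu,\ell}:=\sum_{j=\nu-\ell}^{\nu}G^{j}.
\]
The point of this form is that the coefficients $A_{-1,i}$ do not appear in $F$ at all, and that the weight $W_{\nu,\ell}$ depends only on $\nu,\ell$ and not on how the blocks are split.

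Next I would record the budget constraint: by \eqref{eq:relc2} (equivalently \eqref{e:un}--\eqref{e:deux}), for every $\nu\ge0$ one has $\sum_{\ell=0}^{\min\{q,\nu\}}A_{\ell,\nu-\ell}=A_\nu-A_{-1,\nu+1}\le A_\nu$, with all summands nonnegative. The key elementary fact is matrix monotonicity: if $B\ge0$ and $W\ge W'\ge0$ then $BW\ge BW'$. Since $G\ge0$, the weights are nondecreasing in $\ell$, $W_{\nu,\ell+1}=W_{\nu,\ell}+G^{\nu-\ell-1}\ge W_{\nu,\ell}$ for $0\le\ell<\ell^{*}$ where $\ell^{*}:=\min\{q,\nu\}$, hence $W_{\nu,\ell}\le W_{\nu,\ell^{*}}$ for all admissible $\ell$. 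Therefore, for each fixed $\nu$,
\[
\sum_{\ell=0}^{\ell^{*}}A_{\ell,\nu-\ell}\,W_{\nu,\ell}
\ \le\ \Bigl(\sum_{\ell=0}^{\ell^{*}}A_{\ell,\nu-\ell}\Bigr)W_{\nu,\ell^{*}}
\ \le\ A_\nu\,W_{\nu,\ell^{*}},
\]
and summing over $\nu$ yields $F\le\sum_{\nu\ge0}A_\nu\,W_{\nu,\min\{q,\nu\}}$ entrywise, for \emph{every} admissible family $\{A_{\ell,i}\}$.

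Finally I would check that this upper bound is exactly the value of $F$ attained by the choice \eqref{e:unbis}--\eqref{e:troisbis}: that choice is admissible (it plainly satisfies \eqref{e:un}--\eqref{e:deux} and is nonnegative), and for it the only surviving piece with $\ell+i=\nu$ is $A_{\nu,0}=A_\nu$ (at $\ell=\nu$) when $\nu\le q$, contributing $A_\nu\sum_{j=0}^{\nu}G^{j}=A_\nu W_{\nu,\nu}$, and $A_{q,\nu-q}=A_\nu$ (at $\ell=q$) when $\nu>q$, contributing $A_\nu\sum_{j=\nu-q}^{\nu}G^{j}=A_\nu W_{\nu,q}$; re-summing $A_q(z)=\sum_{i\ge0}A_{q,i}z^{i}$ reproduces \eqref{eq:mass}. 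Hence the bound is sharp and \eqref{e:unbis}--\eqref{e:troisbis} maximizes $F$ entrywise. I expect no genuinely hard step here; the only thing to be careful about is the non-commutativity — the nonnegative pieces $A_{\ell,\nu-\ell}$ multiply the weight matrices on the left, so one must invoke entrywise monotonicity ($B\ge0,\ W\ge W'\Rightarrow BW\ge BW'$) rather than any scalar comparison — and keeping track of the index shift $\nu=\ell+i$ that ties the degree-$i$ coefficient of $A_\ell(z)$ to the block $A_\nu$. (Uniqueness of the maximizer, as opposed to maximality, would require strict inequalities $W_{\nu,\ell}<W_{\nu,\ell^{*}}$ and hence extra positivity hypotheses on $G$, which are not claimed.)
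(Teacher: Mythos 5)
Your proposal is correct and follows essentially the same route as the paper's proof: you re-index the double sum so that each $A_{\ell,i}$ is paired with a weight $W_{\nu,\ell}=\sum_{j=\nu-\ell}^{\nu}G^{j}$ (the paper's $B_{k,\ell}$), note that this weight is nondecreasing in the embedding index, and conclude that concentrating all of $A_\nu$ at the largest admissible index (and none of it in $A_{-1,\nu+1}$) maximizes $F$ entrywise. Your version is merely more explicit than the paper's about the budget constraint and the entrywise-monotonicity step, and about verifying that the bound is attained.
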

\begin{proof}
The sum $F$ of \eqref{eq:MN2} may be re-written as
\[
F  = \sum_{0 \leq k \leq q}  \ \sum_{i \geq 0} A_{k,i}  \sum_{i \leq
       \nu \leq k+i} G^\nu= \sum_{\ell \geq 0}  \ \sum_{0 \leq k \leq \min(\ell,q)}
   A_{k,\ell-k}  \sum_{\ell - k \leq \nu \leq \ell} G^\nu
   \]
which coincides with   
  \[
   \sum_{\ell \geq 0}  \ \sum_{0 \leq k \leq \min(\ell,q)}
   A_{k,\ell-k}  B_{k,\ell},
  \]
where $B_{k, \ell}= \sum_{\ell - k \leq \nu \leq \ell} G^\nu$ is
increasing with $k$, for any given $\ell$.  Thus, it suffices
to use the matrices in (\ref{e:unbis} -- \ref{e:troisbis}) to maximise
$F$ under the constraints (\ref{e:un}, \ref{e:deux}).
Finally, if we equate the coefficients of $z^0$ in (\ref{eq:sum2}), we
find that $A_{-1,0} = A_{-1}$, and this  completes the proof.
\end{proof}

\subsection{Spectral properties}\label{sec:sp}
In this analysis we restrict the attention to the case where $A_\ell(z)\ge A_\ell$, for $\ell=0,1,\ldots,q$, where we recall that the inequality involving matrix power series is meant coefficient-wise.
We show some spectral properties which clarify the role of  $q$ in the numerical properties  of the matrix equation
\begin{equation}\label{eq:mateqq}
X=A_{-1}(G)+A_0(G) X+A_1(G) X^2+\cdots+A_{q-1}(G)X^q+A_q(G)X^{q+1},
\end{equation}
in terms of conditioning and speed of convergence of 
fixed point iterations.

Define the matrix Laurent power series 
$S(z)=I-\sum_{i=-1}^\infty A_iz^i$, and the polynomial
$S_q(z)=I-\sum_{i=-1}^{q} A_i(G)z^i,
$
associated with the matrix equations
 \eqref{eq:mateq} and 
\eqref{eq:mateqq}
respectively.
If  the drift $\mu$ is negative, it is well known see \cite[Theorem 4.12]{blm:book} that there exists $\xi>1$ such that $\det S(\xi)=0$ and
$
\xi=\min\{|z|: z\in\mathbb{C},|z|>1,\det S(z)=0\}$.
The closeness of $\xi$ to 1 governs the convergence of numerical
methods for solving the matrix equation \eqref{eq:mateq}, as well as
the conditioning of the problem: the closer is $\xi$ to 1, the slower
is the convergence of numerical methods and the worse is the
conditioning \cite[Chapter 7]{blm:book}.

We show in the theorem below that the smallest root of $\det S_q(z)$
outside the closed unit disk is larger than $\xi$, and so the matrix
equation \eqref{eq:mateqq} has better numerical properties than the
original equation \eqref{eq:mateq}, if $A_\ell(z)\ge A_\ell$,
$0\leq \ell \leq q$, and $A_{-1}(z) = A_{-1}$.  In view of Theorem
\ref{thm:4.1}, this is not a very restrictive assumption.

\begin{theorem}\label{thm:fact}
Assume  $q\ge 1$.  If $A_\ell(z)\ge A_\ell$, for
$\ell=0,1,\ldots,q$, and $A_{-1}(z)=A_{-1}$, then the matrix functions $S(z)$ and $S_q(z)$  may be factorized as  
\[
S(z)=U(z)
(I-z^{-1}G),\quad   S_q(z)=U_q(z)(I-z^{-1}G),
\]
where $U(z)=I-\sum_{i=0}^\infty A_i^* z^i$,
$U_q(z)=I-\sum_{i=0}^q B_i^* z^i$, the matrices $A_i^*$, $i \geq 0$,
are defined in \eqref{eq:H} and $B_i^*=\sum_{j=i}^{q} A_j(G) G^{j-i}$,
$i=0,\ldots,q$.

Moreover, $B_0^*=A_0^*$,
$B_i^* \le A_i^*$ for  
$i=1,\ldots,q$, and, if 
$\sum_{i=-1}^\infty A_iz^{i+1}$ is a matrix polynomial
 and if the drift $\mu$ is negative,  there exists $\xi_q\ge \xi>1$ such that $\det S_q(\xi_q)=0$ and
$
\xi_q=
\min\{|z|:z\in\mathbb{C},|z|>1,\det S_q(z)=0\}.
$
\end{theorem}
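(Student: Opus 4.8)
The plan is to establish the factorization first, then the coefficient comparison, and finally the root-location statement, which I expect to be the main obstacle.

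\medskip

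\noindent\textbf{Step 1: The factorizations.} For $S(z)$, the factorization $S(z)=U(z)(I-z^{-1}G)$ with $U(z)=I-\sum_{i\ge 0}A_i^*z^i$ is classical; I would verify it by multiplying out and matching powers of $z$, using that $G$ solves \eqref{eq:mateq} and the definition $A_i^*=\sum_{j\ge i}A_jG^{j-i}$ from \eqref{eq:H}. The identity $A_i^* = A_{i+1}^*G + A_i$ for $i\ge 0$ (together with $A_{-1}+A_0^*G=\ldots$ reproducing $X=A(X)$ at $X=G$) is exactly what makes the telescoping work, so I would isolate that recurrence. For $S_q(z)$ the argument is identical but finite: since $A_{-1}(z)=A_{-1}$, the equation \eqref{eq:mateqq} is solved by $X=G$ (this is precisely \eqref{eq:un} plus the $A_{-1}$ term, i.e.\ the equivalence noted after \eqref{eq:sum2}), and the same telescoping with $B_i^*=\sum_{j=i}^q A_j(G)G^{j-i}$ gives $S_q(z)=U_q(z)(I-z^{-1}G)$, where now $U_q(z)=I-\sum_{i=0}^q B_i^*z^i$ is a genuine polynomial.

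\medskip

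\noindent\textbf{Step 2: Comparison of the $B_i^*$ and $A_i^*$.} For $i=0$: using \eqref{eq:un}, $B_0^*=\sum_{j=0}^q A_j(G)G^j=\sum_{i\ge 0}A_iG^i=A_0^*$. For $i\ge 1$ I would write $B_i^*=\sum_{j=i}^q A_j(G)G^{j-i}$ and $A_i^*=\sum_{j\ge i}A_jG^{j-i}$, then use the relation \eqref{eq:relc2} (with $A_{-1}(z)=A_{-1}$, so $A_{-1,k}=0$ for $k\ge 1$) to express $A_j$ in terms of the $A_{\ell,k}$. Substituting and re-indexing, $A_i^*-B_i^*$ becomes a sum of terms of the form $A_{\ell,k}(\text{nonnegative power of }G)$ with nonnegative coefficients, hence $B_i^*\le A_i^*$. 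Concretely, the hypothesis $A_\ell(z)\ge A_\ell$ means the "extra mass" $A_\ell(G)-A_\ell\ge 0$ is what accounts for the gap, and a bookkeeping argument parallel to the one in the proof of Theorem~\ref{th:12} (exchanging orders of absolutely convergent summations) should close it. The subtle point is to confirm the gap is nonnegative termwise and not merely in some aggregate; I would handle this by grouping according to how each $A_j$, $j>q$, is distributed among $A_0(z),\ldots,A_q(z)$.

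\medskip

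\noindent\textbf{Step 3: Location of $\xi_q$ — the main obstacle.} Assume $\sum_{i=-1}^\infty A_iz^{i+1}$ is a polynomial, say of degree $d+1$, and $\mu<0$. Then $z^{d+1}S(z^{-1})$... — more directly, $z\,S(z)$ is a matrix polynomial whose determinant has $z=1$ as a root (since $A$ is stochastic) and, by \cite[Theorem 4.12]{blm:book}, $\xi>1$ is the smallest-modulus root strictly outside the unit disk. From the factorization, $\det S(z)=\det U(z)\cdot\det(I-z^{-1}G)$; since $\rho(G)\le 1$, all roots of $\det(I-z^{-1}G)$ lie in $\{|z|\le 1\}$, so $\xi$ is the smallest root of $\det U(z)$ outside the closed unit disk — and likewise $\xi_q$ (if it exists) is the smallest such root of $\det U_q(z)$. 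Now $U(z)=I-\sum_{i\ge 0}A_i^*z^i$ and $U_q(z)=I-\sum_{i=0}^q B_i^*z^i$ with $0\le B_i^*\le A_i^*$ and $B_0^*=A_0^*$; moreover $U(1)=H=I-V$ is a nonsingular M-matrix (as $\mu<0$, $\rho(V)<1$). I would apply Lemma~\ref{lem:comp}: after the change of variable $z\mapsto 1/z$, the reciprocal polynomials $\det(z^{?}U(1/z))$ and $\det(z^q U_q(1/z))$ are monic matrix polynomials in $z$ whose nonzero coefficients are $-A_i^*$ versus $-B_i^*$ with $0\le B_i^*\le A_i^*$, so Lemma~\ref{lem:comp} yields $1/\xi_q \ge 1/\xi$, i.e.\ $\xi_q\le\xi$ — wait, this is the wrong direction, so I must instead apply the comparison to $U$ and $U_q$ directly rather than to reciprocals. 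The correct route: $\det U(z)$ and $\det U_q(z)$ are analytic with $\det U(0)=\det U_q(0)=\det I=1>0$ and $\det U(1),\det U_q(1)>0$ (nonsingular M-matrices have positive determinant), while on $(1,\infty)$ the function $\det U_q(t)=\det(I-\sum_{i=0}^q B_i^*t^i)$ decreases no faster than $\det U(t)$ because $B_i^*\le A_i^*$; a Perron–Frobenius/M-matrix monotonicity argument (the smallest $t>1$ with $I-\sum B_i^* t^i$ singular is determined by $\rho(\sum B_i^* t^i)=1$, and $\rho(\sum B_i^* t^i)\le \rho(\sum A_i^* t^i)$ by Theorem~\ref{thm:pf}) shows the threshold for $U_q$ is at least that for $U$, giving $\xi_q\ge\xi$. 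Existence of $\xi_q$ follows since $\det U_q$ is a polynomial that is positive at $t=1$ and, because $B_0^*=A_0^*\ne 0$ with $\rho(B_0^*)<1<\rho(\text{large }t\text{ scaling})$, eventually $\sum B_i^* t^i$ has spectral radius exceeding $1$ (here the polynomial hypothesis on the $A_i$ is used to keep $U_q$ a polynomial of fixed degree). The delicate part is making the M-matrix threshold comparison rigorous — that $\rho\bigl(\sum_{i=0}^q B_i^* t^i\bigr)$ and $\rho\bigl(\sum_{i\ge 0}A_i^* t^i\bigr)$ are both continuous increasing in $t$ on $[1,\infty)$, equal $\rho(V)<1$ nowhere simultaneously with value $1$ before the other, and that the ordering of coefficients forces the ordering of the crossing points; I would lean on Theorem~\ref{thm:pf} and on the fact, already used in Theorem~\ref{th:12}, that these spectral radii are governed by nonnegative matrices dominated entrywise.
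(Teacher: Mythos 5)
Your Steps 1 and 2 are essentially sound and close to the paper's argument (for Step 2 the paper uses a cleaner trick: from \eqref{eq:alz}, write $z^iB_i^*(z)-z^iA_i^*(z)=\sum_{j=0}^{i-1}A_jz^j-\sum_{j=0}^{i-1}A_j(z)z^j\le 0$, which avoids the reindexing bookkeeping you sketch). The real problem is Step 3.

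In Step 3 you actually had the right idea — apply Lemma~\ref{lem:comp} to the reversed polynomials — but you abandoned it after miscalculating the direction. Check the assignment of roles: in Lemma~\ref{lem:comp} the polynomial with the \emph{smaller} coefficients $\tilde B_i$ has the \emph{smaller} dominant root $\tilde\lambda$. Here $B_i^*\le A_i^*$, so the $U_q$-side plays the role of $\tilde B$, giving $1/\xi_q=\tilde\lambda\le\lambda=1/\xi$, i.e.\ $\xi_q\ge\xi$ — the correct direction. You wrote the opposite and then pivoted to a spectral-radius crossing argument that you yourself flag as "delicate" and leave unfinished; that substitute is not a proof.

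There is also a technical issue you gloss over that the paper handles explicitly: the naive reversal $z^qU_q(1/z)=z^qI-\sum_{i=0}^q B_i^*z^{q-i}$ is \emph{not} monic, since its leading coefficient is $I-B_0^*$, not $I$, and so Lemma~\ref{lem:comp} does not directly apply. The paper first factors out $(I-A_0^*)$, writing $U(z)=(I-A_0^*)\bigl(I-\sum_{i\ge1}C_iz^i\bigr)$ with $C_i=(I-A_0^*)^{-1}A_i^*$, and similarly for $U_q$ using $B_0^*=A_0^*$; only then do the reversed polynomials $z^d\bigl(I-\sum_{i=1}^dC_iz^{-i}\bigr)$ and $z^d\bigl(I-\sum_{i=1}^q\tilde C_iz^{-i}\bigr)$ become genuinely monic of degree $d$, so that Lemma~\ref{lem:comp} applies and also delivers existence of $\xi_q$ for free (Perron–Frobenius on the companion matrix). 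Without this normalization, and with the wrong-direction conclusion, your Step 3 does not go through.
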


\begin{proof}
  The factorization of $S(z)$ is known 
  see \cite[Theorem 4.13]{blm:book}.  To verify that
  $S_q(z)=(I-\sum_{i=0}^q B_i^* z^i)(I-z^{-1}G)$, we use the fact that
  $G$ is a solution of the matrix equation \eqref{eq:mateqq} and
  verify that the coefficients of equal powers of $z$ are equal.

To verify that $B_0^*=A_0^*$, we replace $z$ with $G$ in
\eqref{eq:alz}.  To verify that $B_i^*\le A_i^*$, $i=1,\ldots,q$, we
define the matrix power series $B_i^*(z)=\sum_{j=i}^q A_j(z)z^{j-i}$
  and $A_i^*(z)=\sum_{j=i}^\infty A_j z^{j-i}$ and show that
  $B_i^*(z)\le A_i^*(z)$. Since the coefficients of these power series and
  $G$ are nonnegative matrices, by replacing $z$ with $G$, that
  inequality implies $B_i^*\le A_i^*$, $i=1,\ldots,q$.  The inequality
  $B_i^*(z)\le A_i^*(z)$ is equivalent to
  $z^iB_i^*(z)\le z^iA_i^*(z)$ and we obtain from \eqref{eq:alz} that
 $
B_i^*(z)z^i- A_i^*(z)z^i=\sum_{j=0}^{i-1}A_jz^j-\sum_{j=0}^{i-1}A_j(z)z^j\le  0,
$
under the assumption  $A_\ell(z)\ge A_\ell$, $\ell=0,\ldots,q$.
By denoting $u(z)=\det U(z)$, if $\mu<0$ then the roots of $u(z)$ lie
outside the closed unit disk, $u(\xi)=0$ and
$\xi=\min\{|z|:z\in\mathbb{C},|z|>1,u(z)=0\}$. Moreover, since
$\det(I-A_0^*)\ne 0$, then $u(z)=0$ if and only if $\tilde u(z)=0$,
where $\tilde u(z)=\det(I-\sum_{i=1}^d C_iz^i)$, with
$C_i=(I-A_0^*)^{-1}A_i^*$ and $d$ is such that $A_i=0$ for $i>d$.
Similarly, by denoting $u_q(z)=\det U_q(z)$, since $B_0^*=A_0^*$, we
have $u_q(z)=0$ if and only if $\tilde u_q(z)=0$, where
$\tilde u_q(z)=\det(I-\sum_{i=1}^q \tilde C_iz^i)$, with
$\tilde C_i=(I-A_0^*)^{-1}B_i^*$, $i=1,\ldots,q$. The inequality
$\xi_q \geq \xi$ follows by applying Lemma \ref{lem:comp} to the
reversed matrix polynomials $z^d(I-\sum_{i=1}^d C_iz^{-i})$ and
$z^d(I-\sum_{i=1}^q \tilde C_iz^{-i})$.
\end{proof}

\subsection{Embedding the mass in the largest degree coefficient}\label{sec:emb}
It follows from Theorem \ref{thm:4.1} that the fastest convergence of
the sequence $\{ X_k\}_k$ is obtained by embedding the tail of the
series into the coefficient of largest degree. Moreover, embedding
the mass into a coefficient of index $q_2>q_1$, gives a sequence having a faster
convergence rate. Hence, the larger  $q$, the faster
convergence of the sequence $\{ X_k\}_k$.  In the limit case where
$A(z)$ is a polynomial of degree $d$, one iteration
is sufficient to obtain $G$ if one sets $q=d-1$. However, in this latter case, the new algorithm does not provide any advantage, since the equation to be solved coincides with the original one.
If the coefficients are defined as in \eqref{eq:mass}, one easily
checks from  Theorem~\ref{thm:fact} that $U_q(z)$ is obtained by
truncating the series $U(z)$ at a polynomial of degree $q$, i.e.,
$U_q(z)=I-\sum_{i=0}^{q} A_i^*z^i$.   Furthermore, it follows from
Lemma~\ref{lem:comp}, that $\xi_{q_1}\ge \xi_{q_2}$ if
$q_1<q_2$. Therefore, the numerical properties of the matrix equation
\eqref{eq:mateqq} with coefficients defined in \eqref{eq:mass} are
better for smaller values of $q$.
Hence, there is an optimal value $q$ which results from a trade-off
between the good convergence properties of the sequence $\{X_k\}_k$
and the good numerical properties of the matrix equation \eqref{eq:mateqq}
to be solved at each step $k$.  From a theoretical point of view, it
is difficult to determine the optimal value of $q$.  We will discuss
this issue in Section~\ref{sec:cost}.

\section{The case of stochastic initial approximation}
   \label{sec:stoc}
In this section we study the convergence of the sequence
\eqref{eq:fixquad2}, in the case where the starting approximation
$X_0$ is a stochastic matrix, and we prove that it is formed of
stochastic matrices and converges to the stochastic solution of
\eqref{eq:mateq}.

\begin{theorem}\label{thm:convst}
Assume that the drift $\mu$ is nonpositive and let $X_0$ be a stochastic matrix. Then, for any $k\ge 0$,
the matrix equation \eqref{eq:fixquad2} has a unique stochastic solution $X_{k+1}$, so that the sequence $\{X_k\}_k$ is well defined. Moreover, the sequence $\{X_k\}_k$ converges to the minimal nonnegative solution 
$G$ of \eqref{eq:mateq}, which is stochastic.
\end{theorem}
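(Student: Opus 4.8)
The plan is to mimic the monotone-convergence argument of Theorem~\ref{thm:conv2}, but now tracking the property that each iterate is stochastic rather than just substochastic. The key observation is that when $\mu\le 0$, the minimal nonnegative solution $G$ of \eqref{eq:mateq} is itself stochastic by Theorem~\ref{thm:existsto}, so there is no gap between $G$ and $\Gsto$; thus it suffices to show that the sequence $\{X_k\}_k$ stays between two stochastic matrices that both converge to $G$. First I would verify the base step: if $X_0$ is stochastic, then each $A_\ell(X_0)$ is nonnegative and $\sum_{\ell=-1}^q A_\ell(X_0)X_0^{\ell+1}\ones=\sum_{\ell=-1}^q A_\ell(\ones)\cdot 1=\sum_{i=-1}^\infty A_i\ones=\ones$ using \eqref{eq:sum2} with $z=1$; hence the linear equation \eqref{eq:fixquad2} for $X_1$ has row sums exactly $1$, and by Theorem~\ref{thm:existsto} applied to the coefficient matrices $A_\ell(X_0)$ (re-indexed as an $M/G/1$ equation of finite degree $q+1$, whose own drift I must check is nonpositive — see below) it admits a unique stochastic solution $X_1$. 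Induction then gives that every $X_k$ is stochastic and \eqref{eq:fixquad2} is solvable at each step, so $\{X_k\}_k$ is well defined.

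For convergence, I would run a two-sided squeeze. On one side, let $\{Y_k\}_k$ be the sequence of Theorem~\ref{thm:conv2} with $Y_0=0$, which increases monotonically to $G$; a routine induction using $Y_0=0\le X_0$ and the monotonicity $A_\ell(Y)\le A_\ell(X)$ together with the comparison Theorem~\ref{thm:comparison} (applied to the linear equations defining $Y_{k+1}$ and $X_{k+1}$, noting $Y_{k+1}$ is the minimal solution of an equation with smaller coefficients) shows $Y_k\le X_k$ for all $k$. On the other side, since $G$ is stochastic and $X_0$ is stochastic, I want an upper bound; here I would invoke Theorem~\ref{thm:existsto}'s monotonicity in the starting point, or argue directly that the classical ($q=-1$, natural) iteration started at $X_0$ stays stochastic and dominates $X_k$, converging to $\Gsto=G$. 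Squeezing $Y_k\le X_k\le Z_k$ with $Y_k\uparrow G$ and $Z_k\to G$ forces $X_k\to G$.

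The main obstacle I anticipate is the well-posedness of the inner linear equation \eqref{eq:fixquad2} when $X_0$ (hence $X_k$) is stochastic: Theorem~\ref{thm:existsto} guarantees a unique stochastic solution only under a drift hypothesis on the coefficients $A_\ell(X_k)$, and one must check that this induced drift is nonpositive. The clean way around this is to note that \eqref{eq:fixquad2} with the substitution $X_{k+1}\mapsto$ unknown is an $M/G/1$-type equation whose ``$A$-matrix'' is $\sum_{\ell=-1}^q A_\ell(X_k)$, a stochastic matrix with $\ones$ as a right eigenvector; its drift, computed from $\sum_\ell (\ell)A_\ell(X_k)\ones$, is bounded above by the original drift $\mu\le 0$ because embedding higher-order $A_i$ into lower-degree coefficients can only decrease the expected jump. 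Once that monotonicity of the induced drift is established, Theorem~\ref{thm:existsto} applies verbatim at every step and the rest of the argument is the bookkeeping above; alternatively, one can bypass the drift computation entirely by taking the minimal nonnegative solution of \eqref{eq:fixquad2} (which exists by Theorem~\ref{thm:exist}) and showing separately, via the row-sum identity, that it is stochastic because it lies below a stochastic matrix and has row sums $\ge$ those of $Y_{k+1}$ which tend to $1$.
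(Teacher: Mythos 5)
Your base step and the induction that every $X_k$ is stochastic follow the paper's route, but you over-complicate it: you worry that Theorem~\ref{thm:existsto} ``guarantees a unique stochastic solution only under a drift hypothesis on the coefficients $A_\ell(X_k)$,'' and then spend a paragraph trying to check that the induced drift of the degree-$(q{+}1)$ equation is nonpositive. Re-read Theorem~\ref{thm:existsto}: the existence and uniqueness of the stochastic solution requires \emph{only} that the coefficients are nonnegative and that $(\sum_i A_i)\ones = \ones$; the drift condition enters only in the final sentence, to identify $\Gsto$ with $G$. Once $X_k$ is stochastic, $A_\ell(X_k)\ones = A_\ell(1)\ones$ and $\sum_{\ell=-1}^q A_\ell(1)\ones = \ones$ by \eqref{eq:sum2} at $z=1$, so the hypothesis of Theorem~\ref{thm:existsto} is met outright --- no drift computation is needed and the ``clean way around this'' and the ``alternatively'' paragraph are both chasing a non-issue.

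The more serious gap is in your convergence argument. You propose a two-sided squeeze $Y_k \le X_k \le Z_k$ and need to produce a sequence $Z_k$ of upper bounds converging to $G$. Your candidate, the natural ($q=-1$) iteration started at the same $X_0$, is not justified as an upper bound: convergence from a stochastic start is not monotone, and a slower iteration need not dominate a faster one pointwise at every step. Theorem~\ref{thm:existsto} does not state the ``monotonicity in the starting point'' you invoke. As written, the upper half of the squeeze is unsupported, and your proof does not close. The paper avoids this entirely: since every $X_k$ is stochastic, the sequence lives in a compact set, so every subsequence has a further convergent subsequence with some stochastic limit $S$; the one-sided bound $Y_k \le X_k$ (with $Y_0 = 0$, $Y_k \uparrow G$) gives $G \le S$, and two stochastic matrices with $G \le S$ must be equal. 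Thus every subsequential limit is $G$, whence $X_k \to G$. If you replace your squeeze by this compactness argument, keeping your lower bound $Y_k \le X_k$ (with the small care that $Y_{k+1}$, the minimal nonnegative solution, sits below the stochastic solution $X_{k+1}$ by Theorem~\ref{thm:existsto}), the proof is correct and essentially the paper's.
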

\begin{proof}
We prove by induction  that $X_k$ is stochastic. For $k=0$, $X_k$ is stochastic. Assume that, for a $k\ge 0$, the matrix $X_k$ is stochastic. 
Observe that $A_i(X_k)\ones=A_i(1)\ones$ so that 
from \eqref{eq:sum2} it follows that $\sum_{i=-1}^q A_i(1)\ones =\ones$.
Thus, applying Theorem \ref{thm:existsto}, we obtain that
\eqref{eq:fixquad2} has a unique stochastic solution $X_{k+1}$. 
Since $\mu\le 0$ then $G$ is stochastic. 
We prove that $\lim_k X_k=G$. Observe that stochastic matrices
form a compact set so that the sequence $\{X_k\}_k$ has a converging
subsequence $\{X_{k_i}\}_i$ which converges to a stochastic matrix $S$. Consider the sequence  defined by recursion \eqref{eq:fixquad2}, obtained by starting with the null matrix, and denote such sequence by $\{ Y_k\}_k$.
 We may easily show by induction that $Y_k\le X_k$ for any $k\ge 0$. Since $\lim_{k\to\infty}Y_k=G$, then $G\le S$.
 Since both $G$ and $S$ are stochastic, then $G=S$.
Therefore, any  converging subsequence of $\{X_k\}_k$
converges to the same limit $G$, therefore the sequence
$\{X_k\}_k$ is convergent and converges to $G$.
\end{proof}

Now we will show that, if $\mu\le 0$, so that $G$ is stochastic, then the
sequence obtained with $X_0$ stochastic converges faster than the
sequence obtained with $X_0=0$.  To this aim, we need to
rewrite \eqref{eq:err1} in a slightly different way.  We subtract
\eqref{eq:fixquad2} from the equation
$G=\sum_{\ell=-1}^q A_\ell(G)G^{\ell+1}$ and obtain
$
E_{k+1}= 
\sum_{\ell=-1}^q 
(A_{\ell}(X_k)G^{\ell+1}-X_{k+1}^{\ell+1})+
(A_\ell(G)-A_\ell(X_k))G^{\ell+1}),
$
where $E_k=G-X_k$.  We use Lemma \ref{t:tech} and find that
\begin{equation}\label{eq:err2}
\begin{aligned}
&E_{k+1}=\sum_{\ell=0}^qA_\ell(X_k)\sum_{j=0}^\ell X_{k+1}^j
E_{k+1}G^{\ell-j} +\widehat S(E_k),\\ 
&\widehat S(E_k)=\sum_{\ell=-1}^q
\sum_{i=1}^\infty A_{\ell,i} \sum_{j=0}^{i-1}X_k^j E_k G^{i-j+\ell}.
\end{aligned}
\end{equation}
We write the matrix product $Y=AXB$ as $\ve(Y)=(B\tr\otimes A)\ve(X)$,
where $\otimes$ is the Kronecker product and $\ve(C)$ is the vector
obtained by stacking the columns of the matrix $C$.  
Setting $\eta_k=\ve(E_k)$, we rewrite \eqref{eq:err2} as
\begin{equation}\label{eq:QGG}
(I-Q(X_k,X_{k+1}))\eta_{k+1}=P(X_k)\eta_k,~~k=0,1,\ldots,
\end{equation}
where
\begin{equation}\label{eq:QGG2}
\begin{aligned}
&Q(X,Y)=
\sum_{\ell=0}^q\sum_{s=0}^\ell \left( (G^{s})\tr\otimes
 A_\ell(X)Y^{\ell-s} \right), \\
&P(X)=\sum_{\ell=-1}^q\sum_{i=1}^\infty \sum_{s=\ell+1}^{\ell+i} (G^{s})\tr\otimes (A_{\ell,i}X^{i+\ell-s}).
\end{aligned}.
\end{equation}

As we are interested in asymptotic convergence results, we analyse in
the next lemma the spectral properties of the matrices $Q(G,G)$ and
$P(G)$.  

\begin{lemma}\label{lem:18} 
  Let $\lambda_1,\ldots,\lambda_m$ be the eigenvalues of $G$. The
  set of eigenvalues of the matrix $Q(G,G)$, defined in
  \eqref{eq:QGG2}, is the union of the sets of eigenvalues of the
  matrices
  $\sum_{\ell=0}^q\sum_{s=0}^\ell \lambda_i^sA_\ell(G)G^{\ell-s}$ for
  $i=1,\ldots,m$. In particular,
  $\rho(Q(G,G))\le \rho( \sum_{\ell=0}^q\sum_{s=0}^\ell
  A_\ell(G)G^{\ell-s})$
  and, if the drift $\mu$ of \eqref{eq:mu} is negative, then
  $\rho(Q(G,G)) <1$, so that $I-Q(G,G)$ is invertible.
\end{lemma}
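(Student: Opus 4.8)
The plan is to exploit the Kronecker-sum structure of $Q(G,G)$ together with a triangularization of $G$. First I would put $G$ in Schur form, i.e. write $G = W T W^*$ with $W$ unitary and $T$ upper triangular having diagonal entries $\lambda_1,\dots,\lambda_m$; since all the matrices appearing in $Q(G,G)$ are polynomials in $G$ together with Kronecker factors $(G^s)^{\textnormal{\tiny T}}$, conjugating the whole sum by $W^{\textnormal{\tiny T}}\otimes W^*$ (or rather by $\bar W\otimes W^*$, taking care of the transpose) turns $Q(G,G)$ into a block upper triangular matrix whose diagonal blocks are exactly $\sum_{\ell=0}^q\sum_{s=0}^\ell \lambda_i^s A_\ell(G)G^{\ell-s}$ for $i=1,\dots,m$. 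This uses only that $(G^s)^{\textnormal{\tiny T}}$ is, under the similarity, upper triangular with diagonal $\lambda_i^s$, while the second Kronecker factor $A_\ell(G)G^{\ell-s}$ is untouched (it moves to its own tensor slot). Reading off the spectrum of a block triangular matrix as the union of the spectra of its diagonal blocks gives the first assertion.

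Next, for the spectral-radius bound, I would observe that each diagonal block $\sum_{\ell=0}^q\sum_{s=0}^\ell \lambda_i^s A_\ell(G)G^{\ell-s}$ is a complex matrix whose entrywise modulus is bounded above, using $|\lambda_i|\le 1$ (which holds because $G$ is substochastic, $G\ones\le\ones$, so $\rho(G)\le 1$) and the nonnegativity of all $A_\ell(G)$ and powers of $G$, by the nonnegative matrix $\sum_{\ell=0}^q\sum_{s=0}^\ell A_\ell(G)G^{\ell-s}$. By the third item of Theorem~\ref{thm:pf}, $\rho$ of each diagonal block is therefore at most $\rho\big(\sum_{\ell=0}^q\sum_{s=0}^\ell A_\ell(G)G^{\ell-s}\big)$; taking the maximum over $i$ and using the first part yields $\rho(Q(G,G))\le \rho\big(\sum_{\ell=0}^q\sum_{s=0}^\ell A_\ell(G)G^{\ell-s}\big)$.

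Finally, for the strict inequality when $\mu<0$, I would relate the nonnegative matrix $\sum_{\ell=0}^q\sum_{s=0}^\ell A_\ell(G)G^{\ell-s}$ to the matrix $F$ of \eqref{eq:MN2}: reindexing the inner sum ($j=\ell-s$ runs from $0$ to $\ell$) shows this matrix is precisely $F=\sum_{\ell=0}^q A_\ell(G)\sum_{j=0}^\ell G^j$. By Theorem~\ref{th:12}, when $\mu<0$ we have $0\le F\le V$ with $\rho(V)<1$, hence $\rho(F)\le\rho(V)<1$ by Theorem~\ref{thm:pf}. Combining with the bound just proved gives $\rho(Q(G,G))\le\rho(F)<1$, so $1$ is not an eigenvalue of $Q(G,G)$ and $I-Q(G,G)$ is invertible. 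The only mildly delicate point is bookkeeping with the transpose in the Schur conjugation — making sure the factor $(G^s)^{\textnormal{\tiny T}}$ becomes upper (not lower) triangular with the correct diagonal $\lambda_i^s$ — but this is routine once one conjugates by $\overline{W}\otimes W^*$ and notes $(W^*GW)^{\textnormal{\tiny T}}$ relations; no genuine obstacle arises.
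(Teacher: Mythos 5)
Your proposal is correct and follows essentially the same route as the paper: Schur-triangularize (the transpose of) $G$, observe that $Q(G,G)$ becomes block triangular with diagonal blocks $\sum_{\ell,s}\lambda_i^s A_\ell(G)G^{\ell-s}$, bound their moduli by the nonnegative matrix $F$ of \eqref{eq:MN2} via Theorem~\ref{thm:pf}, and invoke $\rho(F)<1$ from Theorem~\ref{th:12}. One small bookkeeping remark: to leave the second Kronecker factor literally untouched you should conjugate by a matrix of the form $S\otimes I$ (with $S$ triangularizing $G^{\textnormal{\tiny T}}$), not $W^{\textnormal{\tiny T}}\otimes W^*$ or $\bar W\otimes W^*$ — the latter also conjugates the second slot by $W^*$, which is harmless for the spectral conclusions but contradicts your claim that the factor ``is untouched''; the paper avoids the issue by taking the Schur form of $G^{\textnormal{\tiny T}}$ directly and using $S\otimes I$.
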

\begin{proof}
  Let $T=SG\tr S^*$ be the Schur form of the matrix $G\tr$, where $T$
  is upper triangular with diagonal entries
  $\lambda_1,\ldots,\lambda_m$, $S$ is a unitary matrix and the symbol $*$ denotes conjugate transposition. The matrix
$
(S\otimes I)Q(G,G)(S^*\otimes I)
$
 is block upper triangular with diagonal blocks $\sum_{\ell=0}^q\sum_{s=0}^\ell \lambda_i^sA_\ell(G)G^{\ell-s}$ for $i=1,\ldots,m$.
Since the set of eigenvalues of a block triangular matrix is the union
of the sets of  eigenvalues of the diagonal blocks, then the first
claim follows. Moreover,
as $|\lambda_i|\le 1$ for any $i$, we have
$
|\sum_{\ell=0}^q\sum_{s=0}^\ell \lambda_i^sA_\ell(G)G^{\ell-s}|\le
\sum_{\ell=0}^q\sum_{s=0}^\ell |\lambda_i|^sA_\ell(G)G^{\ell-s}\le
\sum_{\ell=0}^q\sum_{s=0}^\ell A_\ell(G)G^{\ell-s}
$.
If $\mu <0$, the right-most matrix in the inequality above has
spectral radius less than 1 in view of Theorem~\ref{th:12}. Therefore,
$\rho(Q(G,G))<1$  by Theorem~\ref{thm:pf}.
\end{proof}

If the drift $\mu$ is negative, it follows from the invertibility of
$I-Q(G,G)$ that, if the sequence $\{X_k\}_k$ converges to $G$, there
exists $k_0>0$ such that for any $k\ge k_0$ the matrix
$I-Q(X_k,X_{k+1})$ is invertible and, from \eqref{eq:QGG}, we may
write
\begin{equation}\label{eq:etak}
\eta_{k+1}=(I-Q(X_k,X_{k+1}))^{-1}P(X_k)\eta_k.
\end{equation}

If $X_0=0$, since the sequence  $\{X_k\}_k$ converges monotonically to $G$, then $\eta_k\ge 0$ for any $k$ and
$(I-Q(X_k,X_{k+1}))^{-1}P(X_k)\le 
(I-Q(G,G))^{-1}P(G)$. Therefore, 
$
\eta_k\le W^k \eta_0
$, 
where $W=(I-Q(G,G))^{-1}P(G)$, and
$
\| \eta_k\|\le \| W^k\| \| \eta_0\|
$
for any operator norm $\| \cdot \|$,
so that the asymptotic rate of convergence is
\begin{equation}\label{eq:r0}
r^{(0)}=\limsup_k \left(\frac{\|\eta_k\|}{\|\eta_0\|}\right)^\frac1k \le \lim_k \| W^k\|^\frac1k = \rho(W).
\end{equation}

To study the spectral properties of the matrix $W$, we follow an argument similar to the one used in the proof of Lemma \ref{lem:18}.
Since $\ones\tr G\tr=\ones\tr$, we may find 
a unitary matrix $S$, having as first row
$\frac{1}{\sqrt{m}}\ones\tr$, such that
 $T=SG\tr S^*$ is a Schur form of $G\tr$. With this choice,
  the diagonal entries of $T$ are $1,\lambda_2,\ldots,\lambda_m$.

Define $\mathcal P=S\otimes I$. We may verify that the matrix
$\mathcal P W \mathcal P^*$ is a block upper triangular matrix of the form
\begin{equation}\label{eq:t1t2}
\mathcal P W \mathcal P^*=\left[ \begin{array}{cc}
T_1 & * \\
0 & T_2
\end{array} \right],
\end{equation}
where $T_1=M^{-1}N$, with $M$ and $N$ defined in \eqref{eq:MN2}, and  $T_2$ has size $(m^2-m)\times (m^2-m)$. By following the same arguments used in the proof of Lemma \ref{lem:18}, we may show that $\rho(T_2)\le \rho(T_1)$, so that $\rho(W)=\rho(M^{-1}N)$. In particular equation \eqref{eq:r0} provides the same bound $r \le\rho(M^{-1}N)$ obtained in Section \ref{sec:convq}.

If $X_0$ is a stochastic matrix, then  $\{X_k\}_k$ is a sequence of
stochastic matrices that converges to $G$. Therefore $E_k\ones =0$
and  $\eta_k =\ve(E_k)$  belongs to the subspace  orthogonal to the
vectors of the form $\ones\otimes v$ for any $v\in \mathbb R^m$,
and the vector $s_{k}=\mathcal P \eta_k$
 has its first $m$ entries equal to zero, i.e., $s_k\tr=[0,\ldots,0,\hat s_k\tr]$, where $\hat s_k$ has size $m^2-m$.
Since $X_k$ is stochastic, the first column of $SX_kS^*$ is the first column of the identity matrix.
Therefore, defining $W_k=(I-Q(X_k,X_{k+1}))^{-1}P(X_k)$, we have
\begin{equation}\label{eq:php}
\mathcal P W_k \mathcal P^*=\left[ \begin{array}{cc}
T_{1,k} & * \\
0 & T_{2,k}
\end{array} \right],
\end{equation}
where $T_{1,k}$ is an $m\times m$ matrix and $T_{2,k}$ is $(m^2-m)\times (m^2-m)$.
From 
 \eqref{eq:etak} and \eqref{eq:php}, we conclude that
$
\hat s_{k+1}=T_{2,k} \hat s_k$.

Since the asymptotic rate of convergence is independent of the norm,  we may choose the norm $\|x\|':=\|\mathcal P x\|_\infty$. Therefore $\| \eta_k\|'=\|\hat s_k\|_\infty$ and the asymptotic rate of convergence in the stochastic case is
$
r^{(\hbox{\scriptsize sto})}=\limsup_k \| \hat s_k\|_\infty^{1/k}=
\limsup_k\sigma_k$, $\sigma_k= \| T_{2,k-1}T_{2,k-2}\cdots T_{2,k_0}\hat s_{k_0}\|_\infty^{1/k},
$
where $k_0$ is such that $\det(I-Q(X_k,X_{k+1}))\ne0$ for any $k\ge k_0$. 
Since 
$\sigma_k
\le \| T_{2,k-1}T_{2,k-2}\cdots T_{2,k_0}\|_\infty \|\hat s_{k_0}\|_\infty
$
 and $\lim_{k\to\infty}T_{2,k}=T_2$, we have
$
r^{(\hbox{\scriptsize sto})}\le \lim_{k\to\infty}\|T_{2,k}\|_\infty^{1/k}=\rho(T_2),
$
 where the latter implication follows by the same arguments as 
used in \cite{meini:fi}. 
We may conclude with the following theorem.

\begin{theorem}
  Let $r^{(0)}$ and $r^{(\hbox{\scriptsize\rm sto})}$ be the asymptotic
  rates of convergence of the sequences \eqref{eq:fixquad2} with
  $X_0=0$, and with $X_0$ equal to a stochastic matrix, respectively.  Then
  $r^{(0)}\le\rho(W)$. If the drift $\mu$ of \eqref{eq:mu} is
  negative, then $r^{(\hbox{\scriptsize\rm sto})}\le\rho(T_2)$, where
  $T_2$ is the matrix in \eqref{eq:t1t2}. If $W$ is irreducible and
  aperiodic, then $\rho(T_2)<\rho(W)$, otherwise the weak inequality
  holds.
\end{theorem}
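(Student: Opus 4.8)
The theorem collects three claims. The bound $r^{(0)}\le\rho(W)$ is essentially already established in the text in the paragraph culminating in \eqref{eq:r0}: for $X_0=0$ the errors $\eta_k$ are nonnegative, the matrices $(I-Q(X_k,X_{k+1}))^{-1}P(X_k)$ are nonnegative and monotonically dominated by $W=(I-Q(G,G))^{-1}P(G)$, so $\eta_k\le W^k\eta_0$, and Gelfand's formula $\lim_k\|W^k\|^{1/k}=\rho(W)$ gives the claim. So the first sentence of the proof merely reproduces \eqref{eq:r0}. The bound $r^{(\mathrm{sto})}\le\rho(T_2)$ is likewise already assembled above: under $\mu<0$ the matrix $I-Q(G,G)$ is invertible by Lemma~\ref{lem:18}, so for $k\ge k_0$ the recursion \eqref{eq:etak} holds; the block-triangular structure \eqref{eq:php} together with the fact that $\hat s_k$ carries the whole error (since $E_k\ones=0$ in the stochastic case) gives $\hat s_{k+1}=T_{2,k}\hat s_k$; and $T_{2,k}\to T_2$ together with the submultiplicativity argument of \cite{meini:fi} yields $r^{(\mathrm{sto})}\le\rho(T_2)$.

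**The real content** is the last sentence: $\rho(T_2)<\rho(W)$ when $W$ is irreducible and aperiodic, and $\rho(T_2)\le\rho(W)$ in general. For this I would argue as follows. From \eqref{eq:t1t2}, $\mathcal P W\mathcal P^*$ is block upper triangular with diagonal blocks $T_1=M^{-1}N$ and $T_2$; the preceding paragraph already records that $\rho(T_2)\le\rho(T_1)$, hence $\rho(W)=\rho(T_1)=\rho(M^{-1}N)$. So it suffices to compare $\rho(T_2)$ with $\rho(T_1)$. Now $W\ge 0$ (it is a product of nonnegative matrices: $P(G)\ge 0$ from \eqref{eq:QGG2}, and $(I-Q(G,G))^{-1}=\sum_{j\ge0}Q(G,G)^j\ge0$ since $\rho(Q(G,G))<1$). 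If $W$ is irreducible and aperiodic, its Perron eigenvalue $\rho(W)$ is simple and is the unique eigenvalue of maximum modulus. But $\mathcal P W\mathcal P^*$ is similar to $W$, so its spectrum (counted with multiplicity) equals the union of the spectra of $T_1$ and $T_2$. If we had $\rho(T_2)=\rho(W)$, then $\rho(W)$ would be an eigenvalue of $T_2$; since $\rho(W)=\rho(T_1)$ is already an eigenvalue of $T_1$, this would make $\rho(W)$ an eigenvalue of $\mathcal P W\mathcal P^*$ of algebraic multiplicity at least $2$, contradicting simplicity of the Perron root of the irreducible aperiodic matrix $W$. Hence $\rho(T_2)<\rho(W)$. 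When $W$ is merely nonnegative, the weak inequality $\rho(T_2)\le\rho(W)$ is immediate from the same spectral decomposition, since every eigenvalue of $T_2$ is an eigenvalue of $W$ and thus has modulus at most $\rho(W)$.

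**Main obstacle.** The delicate point is the strict inequality: one must rule out that $\rho(W)$ is a repeated eigenvalue of $\mathcal P W\mathcal P^*$ sitting partly in $T_1$ and partly in $T_2$. The cleanest route is the simplicity of the Perron eigenvalue of an irreducible matrix (and, for aperiodicity, that it is strictly dominant in modulus so no other block can reach it either), which I invoked above; one should double-check that irreducibility/aperiodicity of $W$ is exactly the hypothesis needed and that $\rho(W)=\rho(T_1)>0$ so that ``multiplicity $\ge 2$'' genuinely contradicts simplicity. A slightly more careful alternative, if one is uneasy about passing irreducibility through the similarity $\mathcal P$, is to note that $T_1=M^{-1}N$ is itself nonnegative with $\rho(T_1)=\rho(W)$ a simple eigenvalue inherited from the irreducibility of $W$ restricted appropriately, and then apply the structure of block-triangular nonnegative matrices directly; but the similarity argument is shorter and suffices.
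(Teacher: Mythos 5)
Your proposal is correct, and it fills in exactly the details the paper leaves implicit: the theorem is stated in the paper without its own proof block, the first two bounds having already been derived in the preceding paragraphs (via \eqref{eq:r0} for $r^{(0)}$, and via the product $T_{2,k-1}\cdots T_{2,k_0}$ argument citing \cite{meini:fi} for $r^{(\mathrm{sto})}$), and the strict inequality being asserted without detail. Your Perron--Frobenius argument for the strict inequality is the natural one and is sound. One small remark on the logical ordering: as first written, the step ``if $\rho(T_2)=\rho(W)$, then $\rho(W)$ would be an eigenvalue of $T_2$'' is not a consequence of simplicity alone --- $T_2$ could a priori carry a complex eigenvalue of modulus $\rho(W)$. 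You need aperiodicity (primitivity) at exactly this point, to guarantee that $\rho(W)$ is the \emph{only} eigenvalue of $W$ of maximum modulus, so that any peripheral eigenvalue of the block $T_2$ must in fact be $\rho(W)$ itself; only then does the multiplicity-two contradiction with simplicity of the Perron root kick in. You do acknowledge this in your ``Main obstacle'' paragraph, together with the needed facts that $\rho(W)=\rho(T_1)$ (established just above the theorem in the paper) and $\rho(W)>0$ for an irreducible nonnegative $W$ --- so the argument is complete, but it would read more cleanly if the aperiodicity step were folded directly into the main chain of reasoning rather than flagged afterward. The closing ``alternative'' you sketch (passing irreducibility through the similarity) is unnecessary: the similarity $\mathcal{P}W\mathcal{P}^*$ is used only to identify the multiset of eigenvalues of $W$ with the union of those of $T_1$ and $T_2$, and irreducibility/aperiodicity are applied to $W$ itself, not to its conjugate.
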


\section{Computational cost and stability analysis}\label{sec:cost}

In the analysis of the computational cost, we assume that the power series $A(z)$ is a matrix polynomial of degree $d$.
We look at the proposed method as a
two-level iterative method, where the {\em outer iteration} is the iteration defined by \eqref{eq:fixquad2}, while the {\em inner iteration} is the iteration applied to solve the matrix equation of degree $q+1$ at each step $k$ of the outer iteration. 
At each step of the outer iteration, we have to compute the  coefficients of the matrix equation  \eqref{eq:fixquad2}. 
In this analysis we restrict the attention to the case where the mass is embedded in the coefficient $A_q$ of the term of degree $q+1$, i.e., the coefficients are defined by \eqref{eq:mass}.
From \eqref{eq:fixquad2} and \eqref{eq:mass}, at each step of the outer iteration, we have to compute 
$A_q(X_{k})=\sum_{i=0}^{d-q-1}A_{q+i}X_{k}^i$. By using Horner's rule, the cost of the computation of $A_q(X_{k})$ is $2m^3(d-q-1)$ arithmetic operations, where we neglect the $O(m^2)$ terms.

The computational cost of the inner iterations depends on the numerical method used to solve the matrix equation of degree $q+1$. By applying the $U$-based functional iteration, we generate the sequence
\begin{equation}\label{eq:inn}
Z_{\nu+1}=\left( I-\sum_{i=0}^{q-1} A_i Z_\nu^i - A_q(X_k)Z_\nu^q \right)^{-1}A_{-1},\qquad\nu=0,1,\ldots.
\end{equation}
If the matrix inversion is performed by computing the $LU$ factorization and by solving the linear systems, the computational cost per step is $2m^3(q+4/3)$ arithmetic operations, where we neglect the $O(m^2)$ terms.

Therefore, by denoting $N_{\hbox{\scriptsize out}}$ and $N_{\hbox{\scriptsize in}}$  the number of outer iterations and the overall number of inner iterations, the computational cost is
$2m^3(N_{\hbox{\scriptsize out}} (d-q-1)+ N_{\hbox{\scriptsize in}}(q+4/3))$ arithmetic operations.
This estimate should be compared with  the cost of the $U$-based iteration, which is $2m^3N_{\hbox{\scriptsize $U$-based}}(d+1/3)$, where $N_{\hbox{\scriptsize $U$-based}}$ is the number of iterations.

As shown in Section~\ref{sec:opt}, the number of outer iterations decreases as $q$ increases. Indeed, if $A_i=0$ for $i\ge d$, in the limit case of $q=d-1$, one outer iteration is enough to compute the solution $G$.
On the other hand, as pointed in Section~\ref{sec:emb},
smaller values of $q$ provide larger values of $\xi_q$.
This properties implies that, with smaller values of $q$ the conditioning of the matrix equation \eqref{eq:mateqq} is better, and the number of inner iterations for \eqref{eq:mateqq} is lower. 

 Hence, there is a trade-off between the good convergence properties of $\{X_k\}_k$ and good numerical properties of the matrix equation \eqref{eq:mateqq}.
 It is difficult to determine the optimal value of $q$. However, this
 is an asymptotic analysis and, in practice, the number of inner iterations strongly depends on the starting approximation and on the stop condition.
 
In the numerical experiments presented in Section \ref{sec:num}
we have halted the outer iteration 
if the residual error in the infinity norm, that is 
$\delta_k=\frac 1m \|X_k-\sum_{i=0}^dA_{i-1}X_k^i\|_\infty$, is less than $\epsilon=10^{-15}$ 
or if $\delta_{k}$ is significantly larger than the error at the previous step, i.e.,
$\delta_{k}>\delta_{k-1}(1+10^{-3})$. 
At the $k$-th outer step,  we choose $Z_0=X_{k}$ as starting approximation of the inner iteration \eqref{eq:inn} for computing $X_{k+1}$.
The inner iteration is stopped 
if the residual error 
$\tilde{\delta}_\nu=\frac1m \| Z_{\nu}-\sum_{i=0}^{q} A_{i-1} Z_\nu^{i} - A_q(X_k)Z_\nu^{q+1}\|_\infty$ satisfies the condition
$
\tilde{\delta}_\nu<\max\left\{\frac{1}{10}\delta_k, 4u,\frac14 \epsilon\right\},
$
where $u$ is the machine precision, 
or if it is significantly larger than the error at the previous step, i.e, $\tilde\delta_{\nu}>\tilde\delta_{\nu-1}(1+10^{-3})$. 
 As a consequence of this choice, the number of inner iterations does not grow as $q$ grows (see Figure \ref{fig:h}). In all the numerical experiments,  the optimal value of $q$, in terms of overall CPU time, is generally much smaller than $d$.

\section{Numerical experiments}\label{sec:num}
In this section we report  some numerical experiments which validate the theoretical results obtained in the previous sections and show the improvement of the computational efficiency of the new fixed point iterations with respect to classical iterations. All the algorithms have been implemented in Matlab and tested on a Laptop i3-7100 CPU 3.90GHz$\times$4.

\subsection{Test problems}

We have considered two kinds of test problems. The tests of the first kind are
generated synthetically in such a way that all the eigenvalues of the
matrix $G$ have modulus close to 1.
The tests of the second kind are PH/PH/1 queues see \cite{hn98,lr97b}.
We provide below a description of these two classes of problems.\medskip

\subsubsection{Synthetic examples}
We have generated an M/G/1-type Markov chain associated with the matrix polynomial $A(z)=\sum_{i=0}^{d} A_{i-1} z^{i}$ of degree $d$, where the matrix coefficients have size $m\times m$. The matrix coefficients have been constructed in such a way that the drift $\mu$  of the Markov chain \eqref{eq:mu} is close to a given negative value.

Let $C=(c_{i,j})$ be the $m\times m$ circulant matrix such that $c_{i,j}=1$ if $j-i\equiv 1$ mod $m$,  $c_{i,j}=0$ elsewhere.
Let
$v_i$, $i=-1,\ldots,d-1$, be nonnegative real numbers such that $\sum_{i=0}^d v_{i-1}=1$. Since $CC\tr =I$, if $A_i=v_iC^i$ for $i=-1,0,\ldots,d-1$, then $G=C\tr $ solves the equation $G=\sum_{i=0}^d A_{i-1}G^i$. Moreover, since $\ones\tr C^i=\ones\tr $, and $\sum_{i=0}^d v_{i-1}=1$ then 
 $\ones\tr \sum_{i=0}^d A_{i-1}=\ones\tr $ so that
the drift \eqref{eq:mu}  is $\mu=
-v_{-1}+\sum_{i=1}^{d-1}iv_i$.

We use the above properties to generate matrix coefficients  in such a way that the drift is close to an assigned negative value. More specifically, given $\mu<0$, $0<s_1,s_2<1$, and a small positive number $\sigma$, we define 
$
\tilde A_i=v_i C^i+\sigma s_2^{m(i+1)}R_i \Delta,~~i=-1,\ldots,d-1,
$
 where $R_i$ is a random $m\times m$ matrix with entries uniformly distributed between 0 and 1, $\Delta$ is the diagonal matrix with diagonal entries $1,s_2,\ldots,s_2^{m-1}$, and $v_{-1}=\frac{1-s_1^{d-1}}{1-s_1}-\mu$, $v_0=1-v_{-1}-\sum_{i=1}^{d-1}\frac{s_1^{i-1}}{i}$, $v_i=\frac{s_1^{i-1}}{i}$ for $i=1,\ldots,d-1$.
The basis $s_1$ cannot be chosen too close to 1, otherwise $v_0$ is negative.
We may easily check that $\sum_{i=-1}^{d-1}v_i=1$ and that $-v_{-1}+\sum_{i=1}^{d-1}iv_i=\mu$. 
Therefore, if $\sigma=0$, then the drift is exactly $\mu$ and $G=C\tr $ is the minimal nonnegative solution of \eqref{eq:mateq}. If $\sigma>0$ the matrix $\sum_{i=-1}^{d-1}\tilde A_i$ is not stochastic, therefore we define $A_i=D^{-1}\tilde A_i$, $i=-1,\ldots,d-1$, where $D$ is the diagonal matrix with diagonal entries equal to the components of the  vector $\sum_{i=-1}^{d-1}\tilde A_i\ones$.
If $\sigma>0$ is a small number, then the drift is close to the given value $\mu$ and the minimal nonnegative solution $G$ is a small perturbation of $C\tr $. Since the eigenvalues of $C$ are the $m$-th roots of 1, then all the eigenvalues of $G$, except the eigenvalue equal to 1,  have modulus close to 1.
This latter property increases the difficulty of the computation of $G$ see \cite{blm:book}. 

In our experiments,   
we have chosen size $m=20$, degree $d=1500$, and drift $\mu\in\{-0.1, -0.05,-0.01,$ $-0.005\}$.  The two bases $s_1$ and $s_2$ of the exponential decay of the coefficients have been chosen 
as $s_1=0.6$ and  $s_2=0.9995$, while the parameter $\sigma$ for the random perturbation has been chosen as $\sigma=10^{-11}$. We recall that $\mu=0$ means that the Markov chain is null recurrent, in this case the problem is more difficult from the computational point of view, in fact,  the convergence of the fixed point iterations slows down and the problem is more ill-conditioned.\medskip

\subsubsection{PH/PH/1 queues}
We briefly recall the definition of PH/PH/1 queues. For a detailed description we refer the reader to \cite{hn98,lr97b}.
Consider two sequences $\{X_h\}_h$ and $\{Y_h\}_h$ of independent continuous random variables with PH($\vtau,T$)  and
PH($\vbeta,S$) distributions, respectively. Here $\vtau$ and $\vbeta$ are probability vectors of length ${n_1}$ and ${n_2}$, respectively, while $T$ and $S$ are subgenerators of size ${n_1}\times {n_1}$ and ${n_2}\times {n_2}$, respectively, i.e., $-T$ and $-S$ are nonsingular M-matrices.
In the queueing
applications,  $\{X_h\}_h$ represents the intervals between successive
arrivals and $\{Y_h\}_h$ represents the service durations.
Assume points are marked on
a time axis at the epochs $X_1+X_2+\cdots +X_h$ and at the epochs
$Y_1+Y_2+\cdots +Y_h$, $h \geq 1$.

Let $(A_{h-1})_{ij}$ be the probability that $h$ points of type $X$
occur in an interval of type $Y$, and the phase of the last interval
of type $X$ is
$j$, given the phase of the first interval is $i$; the first phase of the
interval $Y$ has distribution $\vbeta$.  In queueing applications,
these would be the probabilities that the queue increases by $h$ units
during a service interval.
Define
$
M_1  = -( T \otimes I_{n_1}+I_{n_2}\otimes S)^{-1} (\vt \cdot \vtau\tr \otimes I_{n_2})$,
$M_0  = -( T \otimes I_{n_1} + I_{n_2} \otimes S)^{-1} (I_{n_1} \otimes \vs \cdot \vbeta\tr)$,
where $\vt = - T \vone$ and $\vs=-S \vone$.
We have
\begin{equation}\label{eq:phph}
A_h = (I_{n_1} \otimes \vbeta\tr) M_1^{h+1} M_0 (I_{n_1}\otimes \vone) 
\qquad \mbox{for all $h \geq -1$.}
\end{equation}
The above matrices are nonnegative and their sum is stochastic if  $-\vbeta\tr S^{-1} \vone < -\vtau\tr T^{-1} \vone$. 
In our experiments, we have chosen as PH($\vbeta,S$) an Erlang
distribution, see \cite{rl89b}.  We start from an Er(${n_2},\lambda$) distribution with ${n_2}=10$ phases and $\lambda=10$, that
is,
$
\vbeta\tr = \vligne{1 & 0 & \ldots & 0}$,
$S=(s_{i,j}), s_{i,i}=-\lambda$, $s_{i,i+1}=\lambda$, $s_{i,j}=0$ elsewhere. 
This matrix is such that $-\vbeta\tr S^{-1} \vone =
1$. 

For the PH($\vtau,T$) distribution, we have taken a pseudo
heavy-tailed distribution, which is used in \cite{dls18} and borrowed
from \cite{rl97b}.  Define the transition matrix $Q=(q_{i,j})$ such that 
$q_{1,1}=-(c+s_a)$, $q_{i,1}=q_{i,i}=-(b/a)^{i-1}$ for $i=2,\ldots,n_2$,
$q_{1,i}=(1/a)^{i-1}$, $i=2,\ldots,n_2$, $q_{i,j}=0$ elsewhere,
where $s_a = (1/a) +(1/a)^2 + (1/a)^3 + \cdots + (1/a)^{{n_2}-1}$.  
The parameters must satisfy the conditions
$a >1$, $a>b>0$, $c >0$.  The initial probability vector is
$\vtau\tr = \vligne{1 & 0 & \cdots & 0}$ and the matrix $T_0$, defined as 
$T_0= (-\vtau\tr Q^{-1} \vone) Q$,  is  such that $-\vtau\tr T_0^{-1} \vone = 1$.
We have chosen the values $a=2$, $b=1$, $c=1.5$ and ${n_1}=10$.
In order to have the expected interval between arrivals equal to
$1/\rho$, we take $T=\rho T_0$.
In summary, the two distributions are  normalized in such a way that
$-\vbeta\tr S^{-1} \vone < -\vtau\tr T^{-1} \vone=1/\rho$ so that
the queue is stable if $\rho < 1$.
Moreover, we may verify that the drift of the M/G/1-type Markov chain
defined by the matrices \eqref{eq:phph} is $\mu=1-\rho$.
The value of $\rho$ has been taken to be $\rho=0.85$.

The matrix power series obtained this way has blocks of size $10\times 10$, and has been truncated to a matrix polynomial of degree $d=61$ so that the infinity norm of the remainder is less than $10^{-16}$.

\subsection{Numerical results}

We have performed different kinds of tests, where the sequences generated by all the fixed point iterations have been started either with 
$X_0=0$ or with $X_0=I$. 
In its wider generality, we generate the sequence \eqref{eq:fixquad2}, where at each step $k$ we solve a matrix equation of degree $q+1$.  
This way we obtain a two-level iterative method, where the {\em outer iteration} is the iteration defined by \eqref{eq:fixquad2}, while the {\em inner iteration} is the iteration applied to solve the matrix equation of degree $q$ at each step $k$ of the outer iteration. \medskip

\subsubsection{The synthetic examples.}
For the synthetic case, as starting approximation for the tested fixed
point iterations, we have always chosen $X_0=I$. Indeed, since all the
eigenvalues of the matrix $G$ are close to one, the performances of
the fixed point iterations are not much different if we start with the null matrix or with a stochastic matrix.

The first test aims to compare the convergence speed of the $U$-based
iteration and of the three iterations obtained by setting $q=1$ and
embedding the tail of the matrix polynomial into the constant, the
linear and the quadratic coefficient, respectively.

In Figure \ref{fig:1} we report the semi-logarithmic plot of the residual error 
in the infinity norm $e_k=\frac1m \|X_k-\sum_{j=0}^d A_{j-1}X_k^{j}\|_\infty$, where $m$ is the matrix size,
for the three iterations together with the residual error of the
$U$-based iteration; to the left the case with drift $\mu=-0.1$, to
the right the case where $\mu=-0.005$. As we can see from this plot,
the four graphs have different slopes, in  accordance with Theorems
\ref{thm:conf} and \ref{th:9}.
In particular, for $\mu=-0.005$, the number of steps needed to have a residual error less than $\epsilon=10^{-15}$ for the $U$-based iteration and for the three iterations relying on the solution of the quadratic equation is 
2170, 1778, 1325, 877, respectively.  
This shows a substantial improvement of our approach in terms of convergence speed.

\begin{figure} 
        \begin{center}
        \begin{tikzpicture}[scale=0.50]
        \begin{semilogyaxis}[
                legend pos = north east, width = .8\linewidth, 
                xlabel = {Iterations}, 
                ylabel = {Residual}, title = {$\mu = -0.1$}]
            \addplot[mark=o, color=blue] table {vresu_1.dat}; 
            \addplot[mark=triangle, color=red] table {vresn1_1.dat}; 
            \addplot[mark=square, color=green] table {vres0_1.dat}; 
            \addplot[mark=diamond, color=cyan] table {vres1_1.dat}; 
\legend{$U$-based, mass on $A_{-1}$, mass on $A_0$, mass on $A_1$}
        \end{semilogyaxis}
        \end{tikzpicture}
~        
        \begin{tikzpicture}[scale=0.50]
        \begin{semilogyaxis}[
                legend pos = north east, width = .8\linewidth, 
                xtick={0,400,800,1200,1600,2000},
                xlabel = {Iterations}, 
                ylabel = {Residual}, title = {$\mu = -0.005$}]
            \addplot[mark=o, color=blue] table {vresu_4.dat}; 
            \addplot[mark=triangle, color=red] table {vresn1_4.dat}; 
            \addplot[mark=square, color=green] table {vres0_4.dat}; 
            \addplot[mark=diamond, color=cyan] table {vres1_4.dat}; 
\legend{$U$-based, mass on $A_{-1}$, mass on $A_0$, mass on $A_1$}
        \end{semilogyaxis}
        \end{tikzpicture}
\end{center}\caption{\footnotesize
Residual errors, for two values of the drift $\mu$, of the three iterations obtained by embedding the tail of the series into the constant, linear and quadratic term, respectively, and of the $U$-based iteration. }\label{fig:1}
\end{figure}
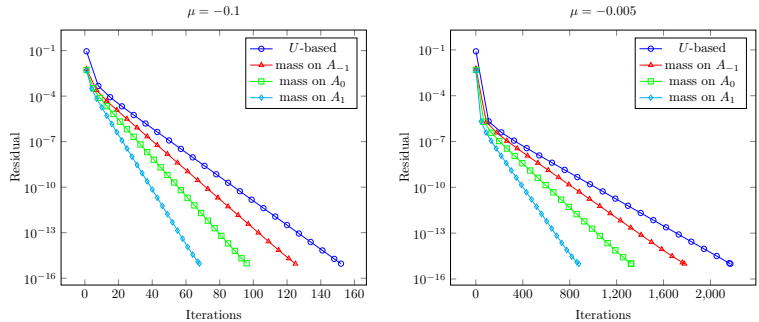

The second test aims to compare the effective gain that we have in terms of CPU time by using the $U$-based iteration or the iteration obtained by embedding the tail in the constant, linear, and quadratic term. In this case, we may have different possibilities according to the way in which the quadratic equation is solved at each step (inner iteration). We have considered two different implementations for this resolution which differ by the way the quadratic equation is solved. The first implementation simply applies the $U$-based iteration as inner iteration, the second implementation applies Cyclic Reduction (CR) as inner iteration.

In Table \ref{tab:1} we report the speed--up of the CPU time with
respect to the $U$-based iteration. For different values of $\mu$, we
report in the first three columns the values of the speed-up obtained
by embedding the mass in $A_{-1}$, $A_0$ and $A_1$, respectively,
where the quadratic equation is solved by means of inner $U$-based
iterations. The last column corresponds to the case where the mass is
embedded in $A_1$ and the quadratic equation is solved by means of the
CR algorithm.
 We see that the acceleration in terms of CPU time obtained by the combination of our algorithm with CR is by a factor greater than 2.
\begin{table} \scriptsize 
\begin{center}
\begin{tabular}{c|cccc}
$\mu ~\backslash~$Alg  &\tt $A_{-1}$  &\tt $A_0$ &\tt $A_1$& $A_1$-CR \\ \hline
$-0.1$& 1.2 & 1.4 & 2.1 & 2.3 \\ 
$-0.05$& 1.2 & 1.4 & 2.0 & 2.3 \\ 
$-0.01$& 1.2 & 1.5 & 2.2 & 2.7 \\ 
$-0.005$& 1.2 & 1.5 & 2.3 & 2.8 \\ 
$-0.001$& 1.2 & 1.5 & 2.5 & 2.8 \\ 
$-0.0005$& 1.2 & 1.5 & 2.3 & 2.9 \\ 
$-0.0001$& 1.1 & 1.5 & 2.2 & 3.0 \\ 
\end{tabular}\caption{\footnotesize
Speed--up, in terms of the CPU time, with respect to the $U$-based iteration, of the algorithms obtained by embedding the mass in the coefficient $A_{-1}$, $A_0$, and $A_1$, respectively, where the quadratic equation  is solved by means of the $U$-based iteration. In the last column the quadratic equation is solved by means of Cyclic Reduction with the mass embedded in $A_1$.}\label{tab:1}
\end{center}
\end{table}
A substantially larger speed--up can be obtained by embedding the mass in the coefficients of higher degree terms. This computational analysis is performed in the next test.

In the third test, we implemented the algorithm where 
 at each step (outer iteration) a matrix equation of degree $q+1$ is solved, and 
 the tail of the matrix polynomial is embedded into the coefficient $A_q$ of the term of degree $q+1$. The matrix equation at each outer iteration is solved by means of the $U$-based algorithm as inner iteration, where the starting approximation is the current approximation of the outer iteration. For any value of $q+1$ in the range $[2,30]$ we computed the CPU time needed to arrive at a residual error less than $10^{-15}$ together with the number of outer iterations and the overall number of inner iterations. The graphs with these values are reported in Figure \ref{fig:h}.
\begin{figure} 
        \begin{center}
        \begin{tikzpicture}[scale=0.4]
        \begin{axis}[
                xlabel = {Degree $q+1$}, 
                ylabel = {CPU time}]
            \addplot[mark=o, color=blue] table {T1.dat}; 
            \addplot[mark=diamond, color=red] table {Tu1.dat}; 
            \addplot[mark=square, color=green] table {Tcr1.dat}; 
        \end{axis}
        \end{tikzpicture}   
~
        \begin{tikzpicture}[scale=0.4]
        \begin{axis}[
                xlabel = {Degree $q+1$}, 
                ylabel = {Outer Iterations}]
            \addplot[mark=o, color=blue] table {Oit1.dat};   
        \end{axis}
        \end{tikzpicture} 
~
        \begin{tikzpicture}[scale=0.4]
        \begin{axis}[
                xlabel = {Degree $q+1$}, 
                ylabel = {Inner Iterations}]
            \addplot[mark=o, color=blue] table {Iit1.dat}; 
            \addplot[mark=*, color=red] table {itu1.dat};  
        \end{axis}
        \end{tikzpicture} 
\\
        \begin{tikzpicture}[scale=0.4]
        \begin{axis}[
                xlabel = {Degree $q+1$}, 
                ylabel = {CPU time}]
            \addplot[mark=o, color=blue] table {T2.dat}; 
            \addplot[mark=diamond, color=red] table {Tu2.dat}; 
            \addplot[mark=square, color=green] table {Tcr2.dat}; 
        \end{axis}
        \end{tikzpicture}   
~
        \begin{tikzpicture}[scale=0.4]
        \begin{axis}[
                xlabel = {Degree $q+1$}, 
                ylabel = {Outer Iterations}]
            \addplot[mark=o, color=blue] table {Oit2.dat};   
        \end{axis}
        \end{tikzpicture} 
~
        \begin{tikzpicture}[scale=0.4]
        \begin{axis}[
                xlabel = {Degree $q+1$}, 
                ylabel = {Inner Iterations}]
            \addplot[mark=o, color=blue] table {Iit2.dat};   
            \addplot[mark=*, color=red] table {itu2.dat};
        \end{axis}
        \end{tikzpicture} 
\\
        \begin{tikzpicture}[scale=0.4]
        \begin{axis}[
                xlabel = {Degree $q+1$}, 
                ylabel = {CPU time}]
            \addplot[mark=o, color=blue] table {T3.dat}; 
            \addplot[mark=diamond, color=red] table {Tu3.dat}; 
            \addplot[mark=square, color=green] table {Tcr3.dat}; 
        \end{axis}
        \end{tikzpicture}   
~
        \begin{tikzpicture}[scale=0.4]
        \begin{axis}[
                xlabel = {Degree $q+1$}, 
                ylabel = {Outer Iterations}]
            \addplot[mark=o, color=blue] table {Oit3.dat};   
        \end{axis}
        \end{tikzpicture} 
~
        \begin{tikzpicture}[scale=0.4]
        \begin{axis}[
                xlabel = {Degree $q+1$}, 
                ylabel = {Inner Iterations}]
            \addplot[mark=o, color=blue] table {Iit3.dat};   
            \addplot[mark=*, color=red] table {itu3.dat};
         \end{axis}
        \end{tikzpicture} 
\\
        \begin{tikzpicture}[scale=0.4]
        \begin{axis}[
                xlabel = {Degree $q+1$}, 
                ylabel = {CPU time}]
            \addplot[mark=o, color=blue] table {T4.dat}; 
            \addplot[mark=diamond, color=red] table {Tu4.dat}; 
            \addplot[mark=square, color=green] table {Tcr4.dat}; 
        \end{axis}
        \end{tikzpicture}   
~
        \begin{tikzpicture}[scale=0.4]
        \begin{axis}[
                xlabel = {Degree $q+1$}, 
                ylabel = {Outer Iterations}]
            \addplot[mark=o, color=blue] table {Oit4.dat};   
        \end{axis}
        \end{tikzpicture} 
~
        \begin{tikzpicture}[scale=0.4]
        \begin{axis}[
                xlabel = {Degree $q+1$}, 
                ylabel = {Inner Iterations}]
            \addplot[mark=o, color=blue] table {Iit4.dat};   
            \addplot[mark=*, color=red] table {itu4.dat};
                    \end{axis}
        \end{tikzpicture} 
\end{center}\caption{\footnotesize
CPU time, in seconds, and number of inner / outer iterations required to compute $G$ within a residual error less than {\tt 1.e-15} by embedding the tail into the coefficient of degree $q+1$.  The equation of degree $q+1$ is solved by means of the $U$-based iteration. The two lines denoted by a green square and a red diamond,  mark the CPU time needed by PWCR \cite{blm:book} 
and by the $U$-based iteration, respectively.
The line denoted by a red star, in the third column, denotes the number of iterations needed by the $U$-based method.
From top to bottom the problems with drift
$-0.1$, $-0.05$, $-0.01$ and $-0.005$, are considered.} \label{fig:h}
\end{figure}
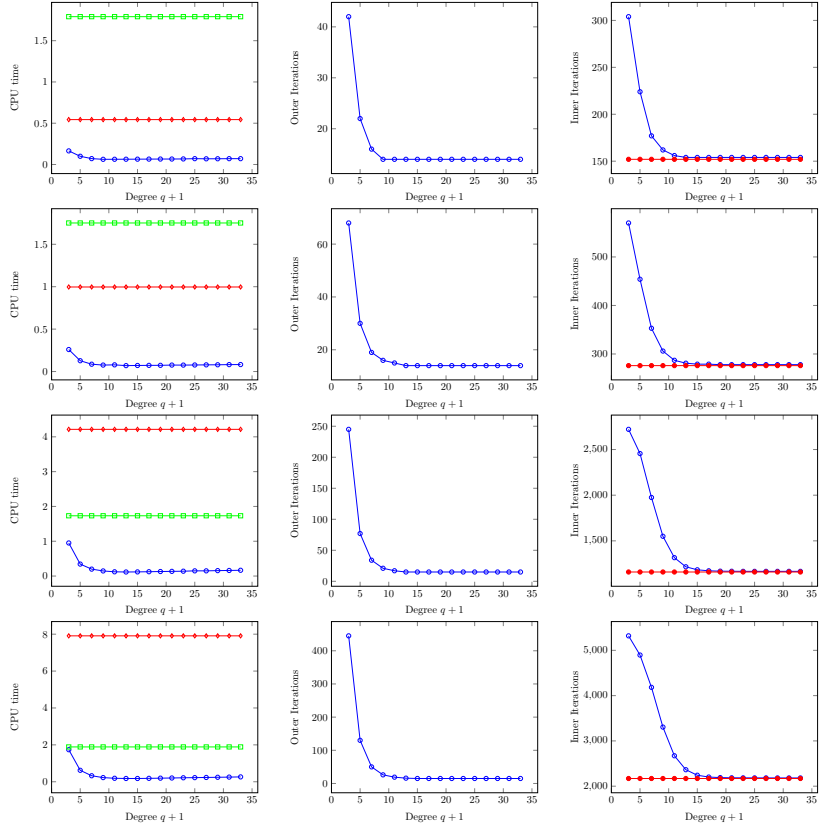

As we can see  from this figure, by increasing the value of $q$, we
obtain a rapid decrease of the CPU time. After reaching the minimum
value, the time slowly increases. It is also interesting to observe
that in this case, the optimal value of $q$ is less than 20. This
value is rather small with respect to the degree $d$ of the matrix
polynomial. 

Another interesting observation is that the number of
outer iterations rapidly decreases and stabilizes for values of $q$
greater than $14$. A similar behaviour has the number of inner
iterations which tend to stabilize for $q\ge 14$ as well. This
explains why the graph of the CPU time has an almost linear growth for
$q\ge 14$. In fact, with the number of inner and outer iterations being almost constant, the time spent for the inner iterations is proportional to $q$ and so is the time spent for the outer iterations. 
It is also interesting to observe that the overall number of inner iterations, reported in the graphs at the third column of Figure \ref{fig:1}, tends to stabilize on the value of the overall number of iterations required by the $U$-based method (line marked by red stars). This explains the higher efficiency of the new iteration with respect to the $U$-based method, since the cost of one step of the $U$-based method is proportional to the degree $d$ of the matrix polynomial while the cost of performing an inner iteration is proportional to the degree $q+1$ of the matrix equation which must be solved at each step.
It is important to point out that, while CR is not self-correcting, the methods based on fixed point iterations are self-correcting. In fact, as shown in Table \ref{tab:res}, unlike CR, the new  iteration allows to obtain approximation to $G$ with a smaller residual error.

In Tables \ref{tab:speedup} and \ref{tab:speedupcr} we report the speed--up factor of the CPU time obtained with the different values of $q>1$ with respect to the time needed by the $U$-based iteration and by the CR algorithm, respectively.
It is interesting to observe that the optimal speed up for each problem ranges from 8.5 to 48.4 if compared to the $U$-based method. This value increases as the drift gets close to 0. The speed-up with respect to CR takes large values only for problems which are far from being null recurrent. This happens since CR, unlike functional iteration, is not much depending on the drift of the stochastic process and has an almost constant CPU time. However, it must be said that CR cannot provide the highest accuracy in the approximation as we have already pointed out.

The acceleration in the CPU time can be further increased if we implement the algorithm in a recursive fashion where, instead of the $U$-based iteration to solve the equation of degree $q+1$, we use the same approach by embedding the mass of the matrix polynomial of degree $q+1$ into the leading coefficient of a matrix polynomial of lower degree.

\begin{table}\scriptsize
\begin{center}
\begin{tabular}{l|llllll} 
$\mu$ &\tt -1e-1&\tt -5e-2&\tt -1e-2&\tt -5e-3&\tt -1e-3&\tt -5e-4 \\ \hline  
CR &\tt 1.7e-14 &\tt  1.7e-14 &\tt  1.7e-14 &\tt  1.7e-14 &\tt  1.8e-14 &\tt  1.8e-14 \\ 
New &\tt 8.5e-16 &\tt  9.8e-16 &\tt  9.8e-16 &\tt  9.9e-16 &\tt  9.8e-16 &\tt  9.9e-16 \\ 
\end{tabular}
\end{center}\caption{\footnotesize
Residual errors for different values of the drift in the approximations provided by CR and by the new iteration with error bound $10^{-15}$.}\label{tab:res}
\end{table}

\begin{table} \scriptsize 
\scriptsize
\begin{center}
\begin{tabular}{c|ccccccccc} 
$\mu\backslash q$& 2 &3   & 4   & 5   & 6   & 7   & 8   & 9   & 10\\ \hline 

\tt -1e-1&3.2 & 4.6 & 5.9 & 7.0 & 7.4 & 8.1 & 8.6 & 8.5 & 8.5
\\ 
\tt-5e-2& 3.3 & 5.2 & 7.8 & 9.6 & 11.3 & 12.4 & 13.0 & 13.5 & 13.4\\ 
\tt-1e-2& 4.4 & 7.7 &12.0 &16.9 &21.4 &25.6 &29.3 &32.2 &33.3\\ 
\tt-5e-3& 4.5 & 8.1 &12.9 &18.3 &24.1 &29.3 &33.6 &37.8 &40.0\\ 
\tt-1e-3&4.6 & 8.5 &13.7 &20.2 &26.3 &31.3 &35.3 &38.4 &41.9\\ 
\tt-5e-4&4.7 & 8.6 &14.1 &20.5 &26.6 &31.3 &34.6 &37.6 &39.8\\ 
\end{tabular}\end{center}\caption{\footnotesize
Speed--up of the CPU time of the iteration obtained by embedding the tail in the coefficient of degree $q+1$, with respect to the $U$-based iteration, obtained for some values of the drift $\mu$.}\label{tab:speedup} 
\end{table}

\begin{table} \scriptsize
\scriptsize
\begin{center}
\begin{tabular}{c|ccccccccc} 
$\mu\backslash q$& 2 &3   & 4   & 5   & 6   & 7   & 8   & 9   & 10\\ \hline 

\tt -1e-1&10.9 & 15.1 & 18.0 & 21.4 & 25.2 & 26.8 & 28.5 & 28.3 & 28.3\\ 
\tt-5e-2& 6.8 & 10.2 & 13.7 & 17.1 & 20.3 & 22.0 & 23.0 & 24.2 & 22.2\\ 
\tt-1e-2& 1.8 & 3.2 &5.0 &7.0 &8.7 &10.0 &11.8 &12.6 &14.2\\ 
\tt-5e-3& 1.1 & 1.9 &3.0 &4.4 &5.8 &7.1 &8.4 &9.2 &10.0\\ 
\tt-1e-3& 0.26 & 0.47 &0.78 &1.1 &1.5 &1.8 &2.1 &2.3 &2.5\\ 
\tt-5e-4& 0.14 & 0.27 &0.44 &0.64 &0.85 &1.0 &1.1 &1.2 &1.3\\ 
\end{tabular}\end{center}\caption{\footnotesize
Speed--up of the CPU time of the iteration obtained by embedding the tail in the coefficient of degree $q+1$, with respect to Cyclic Reduction, obtained for some values of the drift $\mu$.}\label{tab:speedupcr} 
\end{table}

In Figure~\ref{fig:x} we report the residual errors per step, for the methods obtained by embedding the tail into the coefficient of degree $q+1$ for a few values of $q$.
This graph extends to higher values of $q$ the graph in Figure~\ref{fig:1}. We may observe that, the larger $q$, the steeper is the slope of the curve. Moreover, the convergence turns out to be linear with a factor which is smaller for larger $q$.

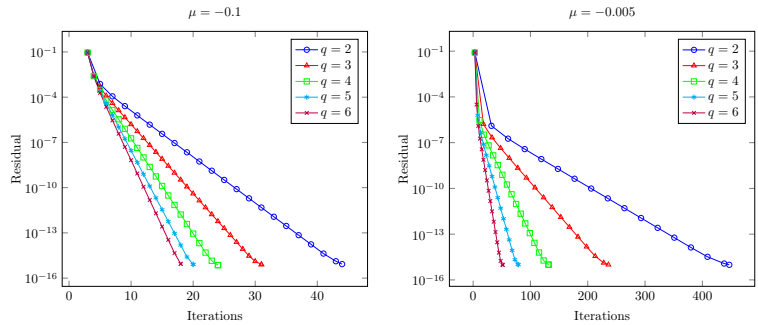
\begin{figure} 
        \begin{center}
        \begin{tikzpicture}[scale=0.50]
        \begin{semilogyaxis}[
                legend pos = north east, width = .8\linewidth, 
                xlabel = {Iterations}, 
                ylabel = {Residual}, title = {$\mu = -0.1$}]
            \addplot[mark=o, color=blue] table {R3_1.dat}; 
            \addplot[mark=triangle, color=red] table {R4_1.dat}; 
            \addplot[mark=square, color=green] table {R5_1.dat}; 
            \addplot[mark=asterisk, color=cyan] table {R6_1.dat}; 
            \addplot[mark=x, color=purple] table {R7_1.dat};
            \legend{$q=2$, $q=3$, $q=4$, $q=5$, $q=6$}
        \end{semilogyaxis}
        \end{tikzpicture}   
~
        \begin{tikzpicture}[scale=0.50]
        \begin{semilogyaxis}[
                legend pos = north east, width = .8\linewidth, 
                xlabel = {Iterations}, 
                ylabel = {Residual}, title = {$\mu = -0.005$}]
            \addplot[mark=o, color=blue] table {R3_4.dat}; 
            \addplot[mark=triangle, color=red] table {R4_4.dat}; 
            \addplot[mark=square, color=green] table {R5_4.dat}; 
            \addplot[mark=asterisk, color=cyan] table {R6_4.dat}; 
            \addplot[mark=x, color=purple] table {R7_4.dat};  
            \legend{$q=2$, $q=3$, $q=4$, $q=5$, $q=6$}
        \end{semilogyaxis}
        \end{tikzpicture}  
\end{center}\caption{\footnotesize
Residual errors per step, for the methods obtained by embedding the tail into the coefficient of degree $q+1$. At each step, an equation of degree $q+1$ is solved. The test problem is the same as the one of Figure \ref{fig:1}.  }\label{fig:x}
\end{figure}

\begin{figure} 
        \begin{center}

                \begin{tikzpicture}[scale=0.4]
        \begin{axis}[
                xlabel = {Degree $q+1$}, 
                ylabel = {CPU time}]
            \addplot[mark=o, color=blue] table {T_php_2_9_0.85_0.dat}; 
            \addplot[mark=diamond, color=red] table {Tu_php_2_9_0.85_0.dat}; 
            \addplot[mark=square, color=green] table {Tcr_php_2_9_0.85_0.dat}; 
        \end{axis}
        \end{tikzpicture}   
~
        \begin{tikzpicture}[scale=0.4]
        \begin{axis}[
                xlabel = {Degree $q+1$}, 
                ylabel = {Outer Iterations}]
            \addplot[mark=o, color=blue] table {Oit_php_2_9_0.85_0.dat};   
        \end{axis}
        \end{tikzpicture} 
~
        \begin{tikzpicture}[scale=0.4]
        \begin{axis}[
                xlabel = {Degree $q+1$}, 
                ylabel = {Inner Iterations}]
            \addplot[mark=o, color=blue] table {Iit_php_2_9_0.85_0.dat}; 
            \addplot[mark=*, color=red] table {itu_php_2_9_0.85_0.dat};  
        \end{axis}
                \end{tikzpicture} 
        \\
                \begin{tikzpicture}[scale=0.4]
        \begin{axis}[
                xlabel = {Degree $q+1$}, 
                ylabel = {CPU time}]
            \addplot[mark=o, color=blue] table {T_php_2_9_0.85_I.dat}; 
            \addplot[mark=diamond, color=red] table {Tu_php_2_9_0.85_I.dat}; 
            \addplot[mark=square, color=green] table {Tcr_php_2_9_0.85_I.dat}; 
        \end{axis}
        \end{tikzpicture}   
~
        \begin{tikzpicture}[scale=0.4]
        \begin{axis}[
                xlabel = {Degree $q+1$}, 
                ylabel = {Outer Iterations}]
            \addplot[mark=o, color=blue] table {Oit_php_2_9_0.85_I.dat};   
        \end{axis}
        \end{tikzpicture} 
~
        \begin{tikzpicture}[scale=0.4]
        \begin{axis}[
                xlabel = {Degree $q+1$}, 
                ylabel = {Inner Iterations}]
            \addplot[mark=o, color=blue] table {Iit_php_2_9_0.85_I.dat}; 
            \addplot[mark=*, color=red] table {itu_php_2_9_0.85_I.dat};  
        \end{axis}
        \end{tikzpicture} 
\end{center}\caption{\footnotesize
CPU time, in seconds, number of outer and inner iterations required to compute $G$ within a residual error less than {\tt 1.e-15} for the PH/PH/1 problem. The two lines denoted by a green square and a red diamond,  mark the CPU time needed by PWCR and by the $U$-based iteration, respectively. 
The line denoted by a red star, in the third column, denotes the number of iterations needed by the $U$-based method.
The graphs in the first row concern the case where $X_0=0$, the graphs in the second row the case where $X_0=I$.
} \label{fig:php}
\end{figure}
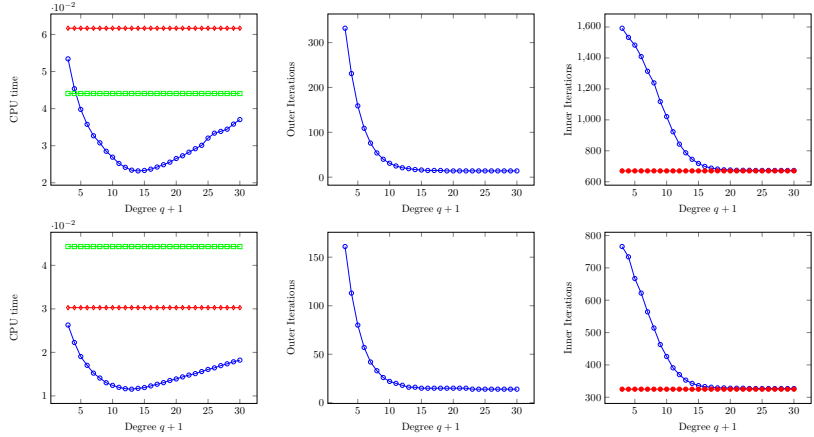

\subsubsection{PH/PH/1 queues}
In this case, we compare the performances of the fixed point iterations, when the starting approximation is the null matrix or a stochastic matrix.


The $U$-based iteration and the new iterations are applied  with $X_0=0$ and $X_0=I$. 
In the first row of Figure \ref{fig:php} we report the case $X_0=0$, in the second row the case $X_0=I$.
We may observe that the same behaviour reported in Figure \ref{fig:h} is maintained. For $X_0=I$ the speed-up, in terms of the CPU time, of the new iteration with respect to the $U$-based method and to CR is 2.6 and 3.8, respectively. For the new iteration, the speed-up obtained for $X_0=I$ with respect to $X_0=0$ is 2.0.

In Tables \ref{tab:phph} and \ref{tab:phphin}, for each value of the degree $q+1$, we have reported the number of outer and inner iterations, respectively, obtained with the two different initial approximations. In the first column, we report the number of iterations 
required by the $U$-based method. In the last column, in boldface, we report the minimum number of outer and inner iterations, respectively, together with the degree $q+1$ of the associated embedding, where the minimum is taken for $q+1$ in the range $[3,d-1]$, being $d$ the degree of the matrix polynomial. Also in this test, the number of inner iterations gets closer to the number of iterations required by the $U$-based method. 

\begin{table}\scriptsize
\begin{center}
\begin{tabular}{cc| ccccccc|c}\\ 
& & \multicolumn{7}{c|}{$q+1$}&\\ 
 &$U$-based &3&4&5&6&7&8&9& \\ \hline
$X_0=0$&670&231 &159 &109 &76 &54 &40 &31 &\bf 14/19\\
$X_0=I$&325&113&80&57&42&33&26&22&\bf 14/23\\
\end{tabular}
\end{center}
\caption{\footnotesize
For different choices of $X_0$, 
number of $U$-based iteration (first column) and
number of outer iterations of the method obtained by embedding the mass in the term of degree $q+1$. In the last column, in bold, the minimum number of outer iterations together with the degree $q+1$ of the corresponding embedding. }\label{tab:phph}
\end{table}

\begin{table}\scriptsize
\begin{center}
\begin{tabular}{cc| cccccc|c}\\ 
& & \multicolumn{6}{c|}{$q+1$}&\\ 
 &$U$-based &3&4&5&6&7&8& \\ \hline
$X_0=0$&670&1592 &1533 &1483 &1409 &1314 &1239 
  &\bf 673/25\\
$X_0=I$&325&766 &734 &667 &622 &564 &514 
 &\bf 327/23\\
\end{tabular}
\end{center}
\caption{\footnotesize
For different choices of $X_0$, 
number of $U$-based iteration (first column) and
number of inner iterations of the method obtained by embedding the mass in the term of degree $q+1$. In the last column, in bold, the minimum number of inner iterations together with the degree $q+1$ of the corresponding embedding.}\label{tab:phphin}
\end{table}

\bibliographystyle{abbrv}

\clearpage

\end{document}